\theoremstyle{plain}
\newtheorem{theorem}{Theorem}[section]
\newtheorem*{Theorem B}{Theorem B}
\newtheorem*{Theorem A}{Theorem A}
\newtheorem{lemma}{Lemma}[section]
\newtheorem{corollary}{Corollary}[section]
\numberwithin{equation}{section}
\theoremstyle{remark}
\newtheorem{remark}{Remark}[section]
\begin{document}

\title[Gradient estimation along super Perelman Ricci flow]
{Gradient estimation of a generalized non-linear heat type equation along Super-Perelman Ricci flow on weighted Riemannian manifolds}

\author[Y. Li, A. Abolarinwa, S. Ghosh and S. K. Hui]{Yanlin Li, Abimbola Abolarinwa, Suraj Ghosh and Shyamal Kumar Hui}

\subjclass[2020]{53C21, 58J60, 58J35}
\keywords{Gradient estimation, Harnack inequality, weighted Riemannian manifold, super Perelman Ricci flow.}

\begin{abstract}
In this article we derive gradient estimation for positive solution of the equation
\begin{equation*}
(\partial_t-\Delta_f)u = A(u)p(x,t) + B(u)q(x,t) + \mathcal{G}(u)
\end{equation*}
on a weighted Riemannian manifold evolving along the  $(k,m)$ super Perelman-Ricci flow 
\begin{equation*}
\frac{\partial g}{\partial t}(x,t)+2Ric_f^m(g)(x,t)\ge -2kg(x,t).
\end{equation*}
As an application of gradient estimation we derive a Harnack type inequality along with a Liouville type theorem.
\end{abstract}
\maketitle
 
\section{introduction}
The study of heat type equations on Riemannian manifolds is an active area of research in modern geometric analysis. Gradient estimation is a method from theory of PDE analysis that is used to derive bounds for gradient of some solution of a PDE without explicitly calculating the solution. There are various gradient estimations that gives different properties of solutions. The Harnack estimation was originated after the work of Harnack \cite{HARNACK} where he gave an inequality comparing the heat of two different points on a space. This inequality proves Harnack's theorem and H\"older regularity for weak solutions of harmonic functions. We consider $(M, g, d\nu)$ as a complete $n$-dimensional Riemannian manifold with weighted volume measure $d\nu = e^{-f} d\mu$, where $d\mu$ denotes the Riemann volume measure of $M$ and the potential function $f$ is smooth. The weighted Laplace operator is defined by
\begin{equation}
\Delta_f u = \Delta u - \langle \nabla f, \nabla u \rangle,
\end{equation}
for some smooth function $u \in C^{\infty}(M)$, where $\Delta$ is the usual Laplace-Beltrami operator. In this paper we consider the equation 
\begin{equation}\label{heat}
 (\partial_t-\Delta_f)u=A(u)p(x,t)+B(u)q(x,t)+\mathcal{G}(u)
\end{equation}
along with the $(k,m)$ super Perelman-Ricci flow defined by
\begin{equation}\label{flow}
\frac{\partial g}{\partial t}(x,t)+2Ric_f^m(g)(x,t)\ge -2kg(x,t),
\end{equation}
where $Ric_f^m$ is the Bakry-\'Emery Ricci tensor. The functions $p(x, t)$ and $q(x, t)$ are smooth functions or at least of class $C^{1}$ in $x$ and $C^0$ in $t$. $A(u)$, $B(u)$ and $\mathcal{G}(u)$ are sufficiently smooth functions.

Hamilton \cite{Hamilton-1,Hamilton} was the person who introduces the notion of Ricci flow and successfully derived Harnack estimates for Ricci flow. After the work of Li and Yau \cite{Li-Yau} this study gained a massive popularity. They introduced the idea of Li-Yau type gradient estimate and derived Harnack estimation for positive solution of Schr\"odinger type operator on complete Riemannian manifold. Later Perelman's \cite{PERELMAN} revolutionary contribution gave a new perspective to study PDE over static as well as evolving Riemannian manifolds. One of the important application of gradient estimation is Liouville type theorems, in which one can derive curvature restrictions to find whether a solution is constant or not. One can see the work of Li \cite{Li}, Sun \cite{JSUN}, Wu \cite{WU-2} and the references therein for more information. After the popularity of the studies on gradient type estimation several mathematicians derived numerous results in this area, for instance \cite{SAZAMI,WU,MOSER,MOSER-I} and the references therein for more details. Serrin \cite{SERRIN} and Moser \cite{MOSER,MOSER-I} generalized Harnack's inequality to solutions of elliptic and parabolic partial differential equations respectively. In 2018, Cao\cite{Cao} studied gradient estimation for
\begin{equation}
\label{1.4}
\left(\Delta -\partial_t\right)u(x,t)=0,\ (x, t) \in M \times [0,T],
\end{equation}
along Ricci flow, which can be obtained by setting $\mathcal{G} = 0,\ A(u) = 0 ,\ B(u)=0\text{ and } f=$ constant in equation \eqref{heat}. Thus \eqref{heat} is a generalization of (\ref{1.4}). Dung et al. \cite{Dung} derived gradient estimation for 
\begin{eqnarray}
\label{1.5}
	(\partial_t-\Delta_f)u=Au^p+Bu^q+au\log(u)+bu
\end{eqnarray}
and
\begin{eqnarray}
\label{1.6}
	(\partial_t-\Delta_f)u=Ae^{pu}+Be^{-qu}+D.
\end{eqnarray}
If we put $A(u)=Au^p$, $B(u)=Bu^q$, $p=q=1$, $\mathcal{G}(u)=a u \log(u) + bu$ in \eqref{heat}, then the equation \eqref{heat} reduces to (\ref{1.5}) and $A(u)=Ae^{pu},\ B(u)=Be^{-qu},\ p=q=1,\ \mathcal{G}(u)=D$ in \eqref{heat} we will have the equation (\ref{1.6}). 
 Recently, the second author \cite{ABOLA1}, and the second author and Taheri  \cite{ABOLA2} introduced the study of gradient estimation for the $f$-heat equation 
$$ (\partial_t-\Delta_f)u =0$$
and  logarithmically nonlinear $f$-heat equation
$$(\partial_t-\Delta_f)u =au |\log u|^\alpha, \ \ \ \alpha >1$$
on weighted manifolds with time dependent metrics and potential evolving by the  $(k,m)$ super Perelman-Ricci flow \begin{equation}
\frac{\partial g }{\partial t} + 2Ric_f^m(g)(x,t)\ge -2kg(x,t),
\end{equation} 
where $k$ is a real constant.  Soon afterward, Taheri \cite{Taheri} derived gradient estimation for a generalized weighted $\mathcal{G}$-nonlinear parabolic equation
\begin{equation}
(\partial_t-\Delta_f)u = q(x,t) u(x,t) + \mathcal{G}(u)(x,t),
\end{equation} 
coupled with $(k,m)$ super Perelman-Ricci flow and consider some special cases of nonlinearities in $\mathcal{G}(u)$. Recently this kind of works has been further investigated by Hui et al. \cite{Sujit-3,Sujit-1,Sujit-2}

Motivated by the above works in this article we derive gradient estimation for a positive solution of (\ref{heat}) along Super-Perelman Ricci flow \eqref{flow} and as application we derive a Harnack type inequality.  We obtain a bound for the quantity $\frac{|\nabla u|}{u}$ where $u$ satisfies (\ref{heat}) along the flow (\ref{flow}) on $(M^n,g,e^{-f}d\mu)$.
\section{Preliminaries}
On a complete weighted Riemannian manifold $(M, g, e^{-f}d\mu)$ the Bakry-\'Emery Ricci tensor is defined by
\begin{equation} \label{bakry}
Ric_f^m(g)=Ric(g)+\nabla^2 f-\frac{\nabla f\otimes \nabla f}{m-n},
\end{equation}
where $\nabla^2 f$ is the hessian of $f$. It is to be mentioned that the case $m=n$ occurs only the potential function $f$ is constant, i.e., $Ric_f^m(g) = Ric(g)$. And when $m \to \infty$ we have $Ric_f^{\infty}(g)\equiv Ric_f(g) = Ric(g) + \nabla^2 f$.
\begin{lemma}\label{lemma0}
For any smooth function $u \in C^{\infty}(M)$, the weighted Bochner formla is given by
\begin{equation}
\label{2.1}
\frac12 \Delta_f {|\nabla u|^2} = |\nabla^2 u|^2 + \langle \nabla u, \nabla \Delta_f u\rangle + Ric_f (\nabla u, \nabla u).
\end{equation}
\end{lemma}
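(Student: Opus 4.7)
The plan is to reduce the weighted Bochner identity to the classical one by carefully absorbing the drift term $-\langle\nabla f,\nabla\cdot\rangle$ that distinguishes $\Delta_f$ from $\Delta$. The starting point, which I would take as given, is the classical Bochner formula
\begin{equation*}
\tfrac12\Delta|\nabla u|^2 = |\nabla^2 u|^2 + \langle\nabla u,\nabla\Delta u\rangle + Ric(\nabla u,\nabla u).
\end{equation*}

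First, I would expand the left-hand side of the target identity by definition as $\tfrac12\Delta_f|\nabla u|^2 = \tfrac12\Delta|\nabla u|^2 - \tfrac12\langle\nabla f,\nabla|\nabla u|^2\rangle$. Using the symmetry of the Hessian to write $\tfrac12\nabla|\nabla u|^2 = \nabla^2 u(\nabla u,\cdot)$, the drift correction becomes $-\nabla^2 u(\nabla u,\nabla f)$. Substituting the classical Bochner formula yields
\begin{equation*}
\tfrac12\Delta_f|\nabla u|^2 = |\nabla^2 u|^2 + \langle\nabla u,\nabla\Delta u\rangle + Ric(\nabla u,\nabla u) - \nabla^2 u(\nabla u,\nabla f).
\end{equation*}

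Next, I would rewrite $\langle\nabla u,\nabla\Delta u\rangle$ in terms of $\Delta_f u$. By the product rule applied to $\langle\nabla f,\nabla u\rangle$,
\begin{equation*}
\nabla\langle\nabla f,\nabla u\rangle = \nabla^2 f(\nabla u,\cdot) + \nabla^2 u(\nabla f,\cdot),
\end{equation*}
so pairing against $\nabla u$ and invoking symmetry of $\nabla^2 u$ gives $\langle\nabla u,\nabla\langle\nabla f,\nabla u\rangle\rangle = \nabla^2 f(\nabla u,\nabla u) + \nabla^2 u(\nabla u,\nabla f)$. Consequently,
\begin{equation*}
\langle\nabla u,\nabla\Delta u\rangle = \langle\nabla u,\nabla\Delta_f u\rangle + \nabla^2 f(\nabla u,\nabla u) + \nabla^2 u(\nabla u,\nabla f).
\end{equation*}

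Inserting this expression into the previous display, the cross term $\nabla^2 u(\nabla u,\nabla f)$ cancels cleanly, and combining $Ric(\nabla u,\nabla u) + \nabla^2 f(\nabla u,\nabla u) = Ric_f(\nabla u,\nabla u)$ yields precisely the claimed formula. There is no essential obstacle here; the computation is purely algebraic once the classical Bochner formula is available. The only care required is with signs and with the Hessian symmetry that underlies the identification $\tfrac12\nabla|\nabla u|^2 = \nabla^2 u(\nabla u,\cdot)$, so that the two appearances of $\nabla^2 u(\nabla u,\nabla f)$ match and cancel.
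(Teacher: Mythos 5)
Your derivation is correct and complete: the reduction to the classical Bochner formula via $\Delta_f = \Delta - \langle\nabla f, \nabla\cdot\rangle$, the identification $\tfrac12\nabla|\nabla u|^2 = \nabla^2 u(\nabla u,\cdot)$, the rewriting of $\langle\nabla u,\nabla\Delta u\rangle$ using the product rule on $\langle\nabla f,\nabla u\rangle$, the cancellation of the two $\nabla^2 u(\nabla u,\nabla f)$ terms, and the final assembly $Ric + \nabla^2 f = Ric_f$ are all as they should be. The paper states this lemma without proof (it is a standard identity), so there is no paper argument to compare against, but your argument is precisely the standard proof of the weighted Bochner formula.
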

Corresponding the generalized Bakry-\'Emery Ricci tensor, from (\ref{2.1}), we get the following
\begin{equation*}
\frac12 \Delta_f {|\nabla u|^2} = |\nabla^2 u|^2 + \langle \nabla u, \nabla \Delta_f u\rangle + Ric_f^m (\nabla u, \nabla u) + \frac{\langle \nabla u, \nabla f \rangle ^2}{m-n}, 
\end{equation*}
here the inner product is given by $\langle X, Y \rangle :=g(X, Y)$.
\begin{lemma} \label{lemma1}
For any smooth function $u, v \in C^\infty(M)$ we have the followings
\begin{align}
\label{eq_Del_uv}&\Delta(uv)=u\Delta v+ 2\nabla u\nabla v+v\Delta u,\\
&\Delta(\frac{u}{v})=\frac{\Delta u}{v}-\frac{2\nabla(\frac{u}{v})\nabla v}{v}-\frac{u\Delta v}{v^2}
\end{align}
\end{lemma}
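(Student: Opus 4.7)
The plan is to prove both identities by direct computation from two elementary differential rules: the Leibniz rule for the gradient, $\nabla(uv) = u\nabla v + v\nabla u$, and the product rule for divergence, $\mathrm{div}(uX) = u\,\mathrm{div}(X) + \langle \nabla u, X\rangle$, together with the definition $\Delta h = \mathrm{div}(\nabla h)$. Both auxiliary identities are standard pointwise consequences of the tensorial nature of the Levi-Civita connection and do not require any curvature hypothesis or boundary condition.

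For the first identity, I would apply the Leibniz rule to get $\nabla(uv) = u\nabla v + v\nabla u$, take the divergence of both sides, and expand using the divergence product rule. This produces
\begin{equation*}
\Delta(uv) = u\Delta v + \langle \nabla u, \nabla v\rangle + v\Delta u + \langle \nabla v, \nabla u\rangle,
\end{equation*}
and the symmetry of the metric collapses the two cross terms into $2\nabla u\cdot \nabla v$, giving the claimed formula.

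For the quotient identity, rather than expanding $\Delta(u/v)$ from scratch, I would exploit the first identity by writing $u = v \cdot (u/v)$ and applying the product rule for $\Delta$ to this factorization. This yields
\begin{equation*}
\Delta u = \tfrac{u}{v}\,\Delta v + 2\,\nabla\!\bigl(\tfrac{u}{v}\bigr)\cdot \nabla v + v\,\Delta\!\bigl(\tfrac{u}{v}\bigr),
\end{equation*}
and solving algebraically for $\Delta(u/v)$ (dividing through by $v$) reproduces the second formula exactly. The only hypothesis needed is that $v$ is nonvanishing wherever $u/v$ is evaluated; there is no conceptual obstacle, as the whole argument is pointwise algebra on first-order differential identities. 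The mild subtlety to keep in mind is simply consistency of notation: the term $\nabla u \nabla v$ in the statement denotes the metric inner product $\langle \nabla u,\nabla v\rangle$, and the same convention is used throughout the quotient computation.
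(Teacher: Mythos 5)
Your proof is correct. The paper states Lemma \ref{lemma1} without proof, treating both identities as standard facts, so there is no paper argument to compare against; your derivation — the Leibniz/divergence expansion for $\Delta(uv)$, followed by the clean trick of applying that product rule to the factorization $u = v\cdot(u/v)$ and solving algebraically for $\Delta(u/v)$ — is the natural route and reproduces both formulas exactly, with the only (correctly noted) hypothesis being that $v$ is nonvanishing.
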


For any two points $x, y \in M$ and for any $t \in (0, T]$ the quantity $d(x, y;t)$
the geodesic distance between $x$ and $y$ under the metric g(t). For any fixed $x_0 \in M$ and $R >0$ we define a compact set
\begin{equation}
Q_{R, T} = Q_{R, T}(x_0) =  \{ (x, t): d(x, x_0;t) \leq R, 0 \leq t \leq T\}
\end{equation}
We denote $a_+ = \max\{a, 0\}$ and $a_- = \max\{-a, 0\}$.
\section{Main Results}

\begin{lemma} \label{lemma3.1}
Let $u$ be a positive solution of (\ref{heat}) on $M$, along the flow (\ref{flow}) satisfying $0 <u \le \delta$ where $\delta$ is a real number. If $w=|\nabla \log(1-h)|^2$ then $w$ satisfies
\begin{align}
\nonumber  (\partial_t-\Delta_f)w =&\frac{-[\partial_t g+2Ric_f]}{(1-h)^2}-2\left|\frac{\nabla^2 h}{(1-h)}+\frac{\nabla h \otimes \nabla h}{(1-h)^2}\right|^2-2(1-h)w^2-\frac{2h \langle \nabla h,\nabla w \rangle}{(1-h)}\\
\nonumber &+\frac{2A(\delta e^h)\langle \nabla h,\nabla p \rangle}{\delta e^h(1-h)^2}+\frac{2B(\delta e^h)\langle \nabla h,\nabla q \rangle}{\delta e^h(1-h)^2} +2w\Bigg[A'(\delta e^h)p+B'(\delta e^h)q \\
\nonumber	&+\frac{h}{(1-h)}\frac{A(\delta e^h)p}{\delta e^h}+\frac{h}{(1-h)}\frac{B(\delta e^h)q}{\delta e^h}+\mathcal{G}'(\delta e^h)+\frac{h}{(1-h)}\frac{\mathcal{G} (\delta e^h)}{\delta e^h}\Bigg],
\end{align}
where $h=\log(u/\delta)$.
\end{lemma}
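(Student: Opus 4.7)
The plan is to reduce the identity to a single application of the weighted Bochner formula from Lemma~\ref{lemma0}. Introducing the intermediate function $v=\log(1-h)$ --- which is well defined because $0<u\le\delta$ forces $h\le 0$ --- one has $w=|\nabla v|^2$, $\nabla v=-\nabla h/(1-h)$, and $\nabla^2 v=-\nabla^2 h/(1-h)-(\nabla h\otimes\nabla h)/(1-h)^2$. Taking $\partial_t$ of $|\nabla v|^2=g^{ij}(\partial_i v)(\partial_j v)$ along the flow contributes $-(\partial_t g)(\nabla v,\nabla v)$ (via $\partial_t g^{ij}=-g^{ia}g^{jb}\partial_t g_{ab}$), while applying Lemma~\ref{lemma0} to $v$ handles the $\Delta_f$ piece, giving
\begin{equation*}
(\partial_t-\Delta_f)w = -[\partial_t g+2Ric_f](\nabla v,\nabla v)-2|\nabla^2 v|^2+2\langle\nabla v,\nabla(\partial_t-\Delta_f)v\rangle.
\end{equation*}
Rewriting $\nabla v$ and $\nabla^2 v$ in terms of $h$ in the first two terms already produces the curvature and Hessian summands in the statement.

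The substantive step is computing $(\partial_t-\Delta_f)v$. Starting from $u=\delta e^h$ and the chain-rule identity $\Delta_f(\delta e^h)=\delta e^h(\Delta_f h+|\nabla h|^2)$, equation \eqref{heat} rewrites as
\begin{equation*}
(\partial_t-\Delta_f)h = |\nabla h|^2+\frac{A(\delta e^h)p+B(\delta e^h)q+\mathcal{G}(\delta e^h)}{\delta e^h}.
\end{equation*}
A second chain rule using $\Delta_f v=-\Delta_f h/(1-h)-|\nabla h|^2/(1-h)^2$ together with $|\nabla h|^2=(1-h)^2 w$ then yields
\begin{equation*}
(\partial_t-\Delta_f)v = hw-\frac{1}{1-h}\Big[\tfrac{A(\delta e^h)p}{\delta e^h}+\tfrac{B(\delta e^h)q}{\delta e^h}+\tfrac{\mathcal{G}(\delta e^h)}{\delta e^h}\Big].
\end{equation*}

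To finish, I would take $\nabla$ of this expression and contract with $2\nabla v=-2\nabla h/(1-h)$. The piece $\nabla(hw)=w\nabla h+h\nabla w$ contributes $-2(1-h)w^2$ (using $w|\nabla h|^2=(1-h)^2 w^2$) and $-2h\langle\nabla h,\nabla w\rangle/(1-h)$. For each nonlinear piece --- for instance $F_1=A(\delta e^h)p/[\delta e^h(1-h)]$ --- the product rule splits $\nabla F_1$ into an $A'(\delta e^h)$ term, an $A(\delta e^h)/(\delta e^h)$ term and a $\nabla p$ term; after pairing with $-2\nabla h/(1-h)$, the two pieces carrying $A(\delta e^h)/(\delta e^h)$ recombine through the identity $-1+1/(1-h)=h/(1-h)$ into the $(h/(1-h))\cdot A(\delta e^h)p/(\delta e^h)$ factor on $w$ that appears in the statement, while the $A'$ and $\nabla p$ pieces land directly as $2wA'(\delta e^h)p$ and $2A(\delta e^h)\langle\nabla h,\nabla p\rangle/[\delta e^h(1-h)^2]$. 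The analogous computation for $B(\delta e^h)q$ and $\mathcal{G}(\delta e^h)$ (the latter producing no $\nabla p$-type term) closes the identity.

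The mathematical content is entirely Bochner plus two rounds of the chain rule; the main obstacle is purely algebraic bookkeeping --- tracking every power of $1-h$ and $\delta e^h$ through the product rule and verifying that mismatched powers collapse via the identity above. Organising the calculation so that the three nonlinear pieces are handled separately keeps these cancellations transparent.
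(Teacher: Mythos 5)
Your proposal is correct, and it takes a genuinely different — and cleaner — route than the paper. The paper never introduces $v=\log(1-h)$; it works directly with $w=|\nabla h|^2/(1-h)^2$, computing $\Delta_f w$ via the quotient identity of Lemma~\ref{lemma1} (producing equation~\eqref{eqn3.2} with four separate terms), computing $\partial_t w$ by hand, applying Bochner to $h$ inside the resulting mess, and then completing the square by hand in~\eqref{11} with the help of the auxiliary identity~\eqref{eq_3.8}, $\langle(1-h)\nabla h,(1-h)\nabla w\rangle=\langle\nabla|\nabla h|^2,\nabla h\rangle+2|\nabla h|^4/(1-h)$, to reconstitute the $-2h\langle\nabla h,\nabla w\rangle/(1-h)$ and $-2(1-h)w^2$ terms. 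Your version instead applies the weighted Bochner formula of Lemma~\ref{lemma0} directly to $v$, which hands you $-2|\nabla^2 v|^2=-2\bigl|\tfrac{\nabla^2 h}{1-h}+\tfrac{\nabla h\otimes\nabla h}{(1-h)^2}\bigr|^2$ and the curvature term at once, leaving only the single cross-term $2\langle\nabla v,\nabla(\partial_t-\Delta_f)v\rangle$ to expand. The identity $(\partial_t-\Delta_f)v=hw-\tfrac{1}{1-h}\bigl[\tfrac{A(\delta e^h)p+B(\delta e^h)q+\mathcal{G}(\delta e^h)}{\delta e^h}\bigr]$ you derive is correct, and the contraction with $2\nabla v=-2\nabla h/(1-h)$ does produce the $-2(1-h)w^2-\tfrac{2h\langle\nabla h,\nabla w\rangle}{1-h}$ block from $\nabla(hw)$ and the remaining $A,B,\mathcal{G}$ terms exactly as in the statement (the $-1+\tfrac{1}{1-h}=\tfrac{h}{1-h}$ collapse you flag is what recombines the two $A(\delta e^h)/\delta e^h$ pieces). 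The trade-off: the paper's route uses only the quotient rule and Bochner on $h$, staying close to the definition of $w$; yours front-loads a change of variable but avoids both the quotient-rule explosion and the completion-of-square step, so the $(1-h)$-power bookkeeping is localized to the single cross-term.
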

\begin{remark}
	It is to be noted that $(\partial_t g)(\nabla h,\nabla h)+2Ric_f(\nabla h,\nabla h)$ is written as $[\partial_t g+2Ric_f]$ for simplicity of notation.
\end{remark}
\begin{proof}[Proof of Lemma \ref{lemma3.1}]
Let $h=\log(u/\delta)$ and $w=|\nabla h|^2/(1-h)^2$. Clearly $h\le0$. Also
\begin{equation}
\label{3.1}
 (\partial_t-\Delta_f)h=|\nabla h|^2 + \delta^{-1}e^{-h}[A(\delta e^h)p+B(\delta e^h)q+\mathcal{G}(\delta e^h)].
\end{equation}
It is easily seen that
 $\displaystyle\nabla w= \frac{\nabla|\nabla h|^2}{(1-h)^2}+\frac{2|\nabla h|^2\nabla h}{(1-h)^3}$. Recalling Lemma \ref{lemma1} we find that
\begin{align}
\nonumber \Delta_f w =& \Delta_f \left(\frac{|\nabla h|^2}{(1-h)^2}\right)\\
\nonumber &=\frac{\Delta_f |\nabla h|^2}{(1-h)^2}-\frac{2}{(1-h)^2}\nabla \left(\frac{|\nabla h|^2}{(1-h)^2}\right)\nabla (1-h)^2-\frac{|\nabla h|^2}{(1-h)^4}\Delta_f (1-h)^2\\
\label{eqn3.2}&=\frac{\Delta_f |\nabla h|^2}{(1-h)^2}+\frac{4 \langle\nabla |\nabla h|^2,\nabla h \rangle}{(1-h)^3}+\frac{2|\nabla h|^2 \Delta_f h}{(1-h)^3}+\frac{6|\nabla h|^4}{(1-h)^4}.
\end{align}
Using $\partial_t w=\frac{\partial_t |\nabla h|^2}{(1-h)^2}+\frac{2|\nabla h|^2\partial_t h}{(1-h)^3}$ and $\partial_t |\nabla h|^2 = -[\partial_t g](\nabla h,\nabla h)+2 \langle \nabla h, \nabla \partial_t h \rangle$ in the above equation, we find that
\begin{equation}
\label{6}
\partial_t w=\frac{-[\partial_t g](\nabla h,\nabla h)}{(1-h)^2}+\frac{2 \langle \nabla h,\nabla \partial_t h\rangle }{(1-h)^2}+\frac{2|\nabla h|^2 \partial_t h}{(1-h)^3}.
\end{equation}
From (\ref{3.1}), we deduce
\begin{equation*}
\partial_t h= \Delta_f h+|\nabla h|^2+\delta^{-1} e^{-h}[A(\delta e^h)p+B(\delta e^h)q+\mathcal{G}(\delta e^h)].
\end{equation*}
Thus we can conclude
	\begin{align}
		\label{7}
	\nonumber	&\langle \nabla h,\nabla \partial_t h \rangle\\
	\nonumber	&= \langle \nabla h,\nabla [\Delta_f h+|\nabla h|^2+\delta^{-1} e^{-h}[A(\delta e^h)p+B(\delta e^h)q+\mathcal{G}(\delta e^h)]] \rangle \\
		\nonumber&=\langle \nabla h,\nabla \Delta_f h \rangle +\langle \nabla h,\nabla |\nabla h|^2 \rangle+\delta^{-1} e^{-h}[A(\delta e^h) \langle \nabla h,\nabla p \rangle+B(\delta e^h) \langle \nabla h,\nabla q \rangle] \\
		\nonumber&+ |\nabla h|^2\delta^{-1} e^{-h}[{p\delta e^hA'(\delta e^h)-pA(\delta e^h)}+{q\delta e^hB'(\delta e^h)-qB(\delta e^h)}\\
		&\ \ \ \ \  \ \ \ \ \ \ \ \ \ \ \ \ \ \ \ \ \ +{\delta e^h\mathcal{G}'(\delta e^h)-\mathcal{G}(\delta e^h)}].
	\end{align}
Putting \eqref{7} in \eqref{6} we infer
	\begin{align}
		\nonumber	&\partial_t w\\
		\nonumber &=\frac{-[\partial_t g](\nabla h,\nabla h)}{(1-h)^2}+\frac{2\langle \nabla h,\nabla \Delta_f h\rangle}{(1-h)^2}+\frac{2\langle \nabla h,\nabla |\nabla h|^2\rangle}{(1-h)^2}+\frac{2A(\delta e^h) \langle \nabla h,\nabla p \rangle}{\delta e^h(1-h)^2}\\
		\nonumber
		&-\frac{2p|\nabla h|^2A(\delta e^h)}{\delta e^h(1-h)^2}+\frac{2p|\nabla h|^2A'(\delta e^h)}{(1-h)^2} +\frac{2B(\delta e^h) \langle \nabla h,\nabla q \rangle}{\delta e^h(1-h)^2}-\frac{2q|\nabla h|^2B(\delta e^h)}{\delta e^h(1-h)^2}\\
		\nonumber
		&+\frac{2q|\nabla h|^2B'(\delta e^h)}{(1-h)^2}+\frac{2|\nabla h|^2\mathcal{G}'(\delta e^h)}{(1-h)^2}
		-\frac{2|\nabla h|^2\mathcal{G}(\delta e^h)}{\delta e^h(1-h)^2}+\frac{2|\nabla h|^2\Delta_f h}{(1-h)^3}\\
		\label{eqn3.5}&+\frac{2|\nabla h|^4}{(1-h)^3}+\frac{2p|\nabla h|^2A(\delta e^h)}{\delta e^h(1-h)^3}+\frac{2q|\nabla h|^2B(\delta e^h)}{\delta e^h(1-h)^3}+\frac{2|\nabla h|^2\mathcal{G}(\delta e^h)}{\delta e^h(1-h)^3}
	\end{align}
	Next we combine (\ref{eqn3.2}), (\ref{eqn3.5}) and after simplification, we infer
	\begin{align}
		\label{9}
		\nonumber	(\partial_t-\Delta_f) w =& \frac{-[\partial_t g]}{(1-h)^2}-\frac{\Delta_f |\nabla h|^2}{(1-h)^2}+\frac{2\langle \nabla h,\nabla \Delta_f h\rangle}{(1-h)^2}+\frac{2\langle \nabla h,\nabla |\nabla h|^2\rangle}{(1-h)^2}-
		\frac{4\langle \nabla h,\nabla |\nabla h|^2\rangle}{(1-h)^3}\\
		\nonumber	&-\frac{6|\nabla h|^4}{(1-h)^4}+\frac{2|\nabla h|^4}{(1-h)^3}+\frac{2A(\delta e^h) \langle \nabla h,\nabla p \rangle}{\delta e^h(1-h)^2}+\frac{2B(\delta e^h) \langle \nabla h,\nabla q \rangle}{\delta e^h(1-h)^2}\\
		\nonumber &-\frac{2p|\nabla h|^2A(\delta e^h)}{\delta e^h(1-h)^2}- \frac{2q|\nabla h|^2B(\delta e^h)}{\delta e^h(1-h)^2}+\frac{2p|\nabla h|^2A'(\delta e^h)}{(1-h)^2}+\frac{2|\nabla h|^2\mathcal{G}'(\delta e^h)}{(1-h)^2}\\
		\nonumber &+\frac{2q|\nabla h|^2B'(\delta e^h)}{(1-h)^2}-\frac{2|\nabla h|^2\mathcal{G}(\delta e^h)}{\delta e^h(1-h)^2}+\frac{2p|\nabla h|^2A(\delta e^h)}{\delta e^h(1-h)^3}+\frac{2q|\nabla h|^2B(\delta e^h)}{\delta e^h(1-h)^3}\\
		&+\frac{2|\nabla h|^2\mathcal{G}(\delta e^h)}{\delta e^h(1-h)^3}.
	\end{align}
	Applying weighted Bochner formula in \eqref{9} we deduce
	\begin{align}
		\nonumber (\partial_t-\Delta_f) w=& \frac{-[\partial_t g+2Ric_f]}{(1-h)^2}-\frac{2|\nabla^2 h|^2}{(1-h)^2}-\frac{2\langle \nabla h,\nabla |\nabla h|^2\rangle}{(1-h)^3}
		-\frac{2\langle \nabla h,\nabla |\nabla h|^2\rangle}{(1-h)^3}\\
		\nonumber&-\frac{4|\nabla h|^4}{(1-h)^4}+ \frac{2\langle \nabla h,\nabla |\nabla h|^2\rangle}{(1-h)^2}+\frac{2A(\delta e^h) \langle \nabla h,\nabla p \rangle}{\delta e^h(1-h)^2}+\frac{2B(\delta e^h) \langle \nabla h,\nabla q \rangle}{\delta e^h(1-h)^2}\\
		\nonumber&-\frac{2p|\nabla h|^2A(\delta e^h)}{\delta e^h(1-h)^2}- \frac{2q|\nabla h|^2B(\delta e^h)}{\delta e^h(1-h)^2}+\frac{2p|\nabla h|^2A'(\delta e^h)}{(1-h)^2}+\frac{2q|\nabla h|^2B'(\delta e^h)}{(1-h)^2}\\
		\nonumber&+\frac{2p|\nabla h|^2A(\delta e^h)}{\delta e^h(1-h)^3}
		+\frac{2q|\nabla h|^2B(\delta e^h)}{\delta e^h(1-h)^3}+\frac{2|\nabla h|^2\mathcal{G}'(\delta e^h)}{(1-h)^2}-\frac{2|\nabla h|^2\mathcal{G}(\delta e^h)}{\delta e^h(1-h)^2}\\
		\nonumber &+\frac{2|\nabla h|^2\mathcal{G}(\delta e^h)}{\delta e^h(1-h)^3}-\frac{2|\nabla h|^4}{(1-h)^4}+\frac{2|\nabla h|^4}{(1-h)^3}
	\end{align}
	\begin{align}\label{11}
		\nonumber	=&\frac{-[\partial_t g+2Ric_f]}{(1-h)^2}-\frac{2}{(1-h)^2}\left|\nabla \nabla h+\frac{\nabla h \otimes\nabla h}{(1-h)}\right|^2-\frac{2\langle \nabla h,\nabla |\nabla h|^2\rangle}{(1-h)^3}\\
		\nonumber & +\frac{4|\nabla h|^4}{(1-h)^3}-\frac{2|\nabla h|^4}{(1-h)^4}+ \frac{2\langle \nabla h,\nabla |\nabla h|^2\rangle}{(1-h)^2}-\frac{4|\nabla h|^4}{(1-h)^4}\\
		\nonumber&+\frac{2A(\delta e^h) \langle \nabla h,\nabla p \rangle}{\delta e^h(1-h)^2}+\frac{2B(\delta e^h) \langle \nabla h,\nabla q \rangle}{\delta e^h(1-h)^2}\\
		\nonumber&
		+\frac{2\delta^{-1} e^{-h}|\nabla h|^2}{(1-h)^2}\Big[\delta e^hA'(\delta e^h)p+\frac{h}{1-h}A(\delta e^h)p+\delta e^hB'(\delta e^h)q\\
		&\ \ \ \ \ \ \ \ \ \ \ \ \ \ \ \ \ \ \ \ \ \ \ +\frac{h}{1-h}B(\delta e^h)q+\delta e^h\mathcal{G}'(\delta e^h)+\frac{h}{1-h}\mathcal{G}(\delta e^h) \Big].
	\end{align}
Note that
\begin{equation}
	\label{eq_3.8}
	\langle (1-h)\nabla h,(1-h)\nabla w\rangle=\langle \nabla |\nabla h|^2,\nabla h\rangle+\frac{2|\nabla h|^4}{(1-h)}.
\end{equation}
Substituting $w=\frac{|\nabla h|^2}{(1-h)^2}$ and using \eqref{eq_3.8} in \eqref{11}, we have our result.

\end{proof}
\begin{lemma}
	Let u be a positive solution of (\ref{heat}) on $M$ along the flow  (\ref{flow}) satisfying $0 <u \le \delta$ and let $h=\log(u/\delta)$, where $\delta$ is a real number. If $w=|\nabla \log(1-h)|^2$ then $w$ satisfies
	\begin{align} \label{2.14}
		\nonumber &(\partial_t-\Delta_f) w\\
		\nonumber &\le -2(1-h)w^2+2kw-\frac{2h\langle \nabla h,\nabla w \rangle}{(1-h)}+\frac{2A(\delta e^h)\langle \nabla h,\nabla p \rangle}{\delta e^h(1-h)^2}+\frac{2B(\delta e^h)\langle \nabla h,\nabla q \rangle}{\delta e^h(1-h)^2}\\
		\nonumber &+2w\Bigg[A'(\delta e^h)p+\frac{h}{(1-h)}\frac{A(\delta e^h)p}{\delta e^h}+B'(\delta e^h)q+\frac{h}{(1-h)}\frac{B(\delta e^h)q}{\delta e^h}\\
		&\ \ \ \ \ \ \ \ \ +\mathcal{G}'(\delta e^h)+\frac{h}{(1-h)}\frac{\mathcal{G} (\delta e^h)}{\delta e^h}\Bigg].
	\end{align}
\end{lemma}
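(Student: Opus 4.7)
The plan is to start from the identity established in Lemma \ref{lemma3.1} and convert it into the asserted inequality by bounding two of the terms on the right-hand side from above. All other summands in the identity are already of the form that appears in \eqref{2.14}, so they can simply be carried over unchanged.

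The easy step is to discard the term
\[
-2\left|\frac{\nabla^2 h}{(1-h)}+\frac{\nabla h \otimes \nabla h}{(1-h)^2}\right|^2,
\]
which is manifestly nonpositive. Dropping it weakens the identity to an inequality in the correct direction.

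The main step is to absorb the curvature/metric term $\frac{-[\partial_t g+2Ric_f](\nabla h,\nabla h)}{(1-h)^2}$ into $2kw$. Here one must reconcile the fact that Lemma \ref{lemma3.1} is stated in terms of $Ric_f$ while the flow \eqref{flow} is stated in terms of $Ric_f^m$. Using the definition \eqref{bakry} of the Bakry--\'Emery tensor, we have
\[
Ric_f(\nabla h,\nabla h)=Ric_f^m(\nabla h,\nabla h)+\frac{\langle \nabla f,\nabla h\rangle^2}{m-n},
\]
and since $m>n$ the last term is nonnegative. Consequently
\[
(\partial_t g+2Ric_f)(\nabla h,\nabla h)\ \ge\ (\partial_t g+2Ric_f^m)(\nabla h,\nabla h)\ \ge\ -2k|\nabla h|^2,
\]
where the second inequality uses the super Perelman--Ricci flow hypothesis \eqref{flow} applied to the vector $\nabla h$. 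Dividing by $(1-h)^2>0$ and negating gives
\[
\frac{-[\partial_t g+2Ric_f](\nabla h,\nabla h)}{(1-h)^2}\ \le\ \frac{2k|\nabla h|^2}{(1-h)^2}=2kw.
\]

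Substituting these two estimates into the identity of Lemma \ref{lemma3.1} yields precisely \eqref{2.14}. The only real subtlety, and the place where one has to be careful, is the inequality $Ric_f\ge Ric_f^m$ on the distinguished vector $\nabla h$, which is what allows the $Ric_f$ appearing in the Bochner computation to be controlled by the $Ric_f^m$ appearing in the flow hypothesis; the remainder of the argument is a mechanical rewriting.
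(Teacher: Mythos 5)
Your proof is correct and takes the same route the paper intends when it says the result ``directly follows from Lemma \ref{lemma3.1}'': drop the manifestly nonpositive squared-norm term, and bound the curvature term using the flow hypothesis \eqref{flow}. You rightly identify and handle the one subtlety the paper leaves silent — Lemma \ref{lemma3.1} produces $Ric_f$ from the Bochner formula while \eqref{flow} constrains $Ric_f^m$, and these are reconciled by the pointwise inequality $Ric_f(\nabla h,\nabla h) \ge Ric_f^m(\nabla h,\nabla h)$ coming from \eqref{bakry} with $m>n$ (and trivially when $m=\infty$ or when $m=n$ forces $\nabla f=0$).
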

\begin{proof}
The proof directly follows from Lemma \ref{lemma3.1}.
\end{proof}
The following Lemma helps us to construct suitable smooth cut-off functions for the proof of our main Theorem. For detail description see \cite{Li-Yau,SOUPLET-ZHANG} and for related applications see for examples \cite{JSUN,Taheri}.
\begin{lemma}[\cite{Taheri}] \label{lemma 3.3}
For $R, T > 0$ and $\tau \in (0,T]$, there exists a smooth function $\bar{\psi} : [0,\infty)\times[0,T]\rightarrow \mathbb{R}$ such that
	\begin{enumerate}
		\item[\rm{(i)}] $\text{supp }\bar{\psi}(r,t)\subset[0,R]\times[0,T]$ and $0\le \bar{\psi}(r,t)\le 1$ in $[0,R] \times [0,T]$,
		\item[\rm{(ii)}] $\bar{\psi}=1$ in $[0,R/2]\times[\tau,T]$ and $\frac{\partial \bar{\psi}}{\partial r}=0$ in $[0,R/2]\times[0,T]$,
		\item[\rm{(iii)}] $|\frac{\partial \bar{\psi}}{\partial t}|\le \frac{c\sqrt{\bar{\psi}}}{\tau}$ on $[0,\infty) \times [0,T]$ for some $c>0$
		and $\bar{\psi}(r,0)=0\ \ \forall r \in [0,\infty)$,
		\item[\rm{(iv)}] $\frac{-c_\epsilon \bar{\psi}^\epsilon}{R}\le \frac{\partial \bar{\psi}}{\partial r}\le 0$ and $|\frac{\partial^2\bar{\psi}}{\partial^2 r}|\le \frac{c_\epsilon \bar{\psi^\epsilon}}{R^2}$ hold on  $[0,\infty)\times[0,T]$,
		for every $0<\epsilon<1$ and some $c_\epsilon>0$.
	\end{enumerate}
\end{lemma}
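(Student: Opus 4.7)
The plan is to construct $\bar{\psi}$ as a separable product $\bar{\psi}(r,t) = \phi(r)\eta(t)$, with $\phi$ a non-increasing spatial cutoff concentrated on $[0,R]$ and $\eta$ a non-decreasing temporal cutoff that switches on over $[0,\tau]$. The product structure makes (i) and (ii) transparent: choosing $\phi \equiv 1$ on $[0,R/2]$, $\phi \equiv 0$ on $[R,\infty)$, $\eta(0)=0$, and $\eta \equiv 1$ on $[\tau,T]$ immediately yields the support condition, the range $0 \le \bar{\psi} \le 1$, the flatness $\partial_r \bar{\psi} \equiv 0$ on $[0,R/2]\times[0,T]$, the value $\bar{\psi}\equiv 1$ on $[0,R/2]\times[\tau,T]$, and the initial condition $\bar{\psi}(r,0)=0$.

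For the spatial factor I would start from a standard smooth non-increasing cutoff $\chi:[0,\infty)\to[0,1]$ with $\chi\equiv 1$ on $[0,R/2]$, $\chi\equiv 0$ on $[R,\infty)$, and $|\chi'|\le C/R$, $|\chi''|\le C/R^2$. To pick up the $\phi^\epsilon$ weights in (iv) I would then raise $\chi$ to a sufficiently high power, setting $\phi = \chi^\alpha$ with $\alpha \ge 2/(1-\epsilon)$. A direct computation gives $|\phi'| \le \alpha\chi^{\alpha-1}|\chi'|$ and $|\phi''| \le \alpha(\alpha-1)\chi^{\alpha-2}(\chi')^2 + \alpha\chi^{\alpha-1}|\chi''|$; since $\alpha - 2 \ge \alpha\epsilon$ by the choice of $\alpha$, both right-hand sides are controlled by a constant multiple of $\phi^\epsilon = \chi^{\alpha\epsilon}$, yielding $|\phi'| \le c_\epsilon \phi^\epsilon/R$ and $|\phi''| \le c_\epsilon \phi^\epsilon/R^2$.

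For the temporal factor I would take a smooth $\eta$ that vanishes to second order at $t=0$ and equals $1$ on $[\tau,T]$, modelled for instance on a smoothed version of $\min\{(t/\tau)^2,1\}$. Such an $\eta$ satisfies $|\eta'(t)| \le c\sqrt{\eta(t)}/\tau$; combined with $\phi\in[0,1]$ this yields
\begin{equation*}
|\partial_t \bar{\psi}| = \phi|\eta'| \le c\,\phi\sqrt{\eta}/\tau \le c\sqrt{\phi\eta}/\tau = c\sqrt{\bar{\psi}}/\tau,
\end{equation*}
which is exactly (iii).

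The main obstacle is reconciling the $\bar{\psi}^\epsilon$ factors in (iv) with the product structure, since $\bar{\psi}^\epsilon$ involves \emph{both} $\phi^\epsilon$ and $\eta^\epsilon$ while the $r$-derivatives only pick up $\eta$ to the first power. Writing $|\partial_r \bar{\psi}| = |\phi'|\eta$ and $\bar{\psi}^\epsilon = \phi^\epsilon\eta^\epsilon$, the desired bound reads $|\phi'|\eta \le c_\epsilon \phi^\epsilon\eta^\epsilon/R$; but because $\eta\in[0,1]$ and $\epsilon\in(0,1)$ one has $\eta \le \eta^\epsilon$, so the inequality reduces to the pure spatial bound $|\phi'|\le c_\epsilon \phi^\epsilon/R$ established above, and the $\phi''$ bound is handled identically. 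With (iv) confirmed, items (i)--(iii) are immediate from the construction, completing the proposal.
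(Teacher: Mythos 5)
The paper does not actually prove this lemma --- it is imported verbatim from \cite{Taheri}, with the construction going back to the Li--Yau and Souplet--Zhang cutoff arguments --- and your separable ansatz $\bar{\psi}(r,t)=\phi(r)\eta(t)$ with $\phi=\chi^{\alpha}$ and a temporal factor vanishing quadratically at $t=0$ is exactly that standard construction. Your verifications of (i)--(iii) are correct, as is the reduction of (iv) to the purely spatial bounds via $\eta\le\eta^{\epsilon}$, and the arithmetic $\alpha\ge 2/(1-\epsilon)\Rightarrow \alpha-2\ge\alpha\epsilon$ is right.

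The one genuine defect is the order of quantifiers in (iv). The lemma asserts the existence of a \emph{single} $\bar{\psi}$, depending only on $R$, $T$, $\tau$, for which the derivative bounds hold for \emph{every} $\epsilon\in(0,1)$ with an $\epsilon$-dependent constant $c_{\epsilon}$; your exponent $\alpha=\alpha(\epsilon)$ makes the cutoff itself depend on $\epsilon$, so as written you only prove the weaker statement that for each fixed $\epsilon$ some $\bar{\psi}_{\epsilon}$ exists. A fixed power does not rescue this: with $\alpha$ fixed you get $|\phi'|\le c\,\chi^{\alpha-1}/R$ and $|\phi''|\le c\,\chi^{\alpha-2}/R^{2}$, which are dominated by $c_{\epsilon}\phi^{\epsilon}/R$ and $c_{\epsilon}\phi^{\epsilon}/R^{2}$ only for $\epsilon\le(\alpha-2)/\alpha<1$, never for all $\epsilon\in(0,1)$. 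Two easy repairs: (a) note that the body of the paper only ever uses (iv) with $\epsilon=1/2$ and $\epsilon=3/4$ (in the estimate \eqref{2.19} and in the $\Delta_{f}\psi$, $\partial_{t}\psi$ bounds), and since $0\le\bar{\psi}\le1$ the inequality for a larger $\epsilon$ implies it for every smaller one, a single fixed choice such as $\alpha=8$ suffices for the application; or (b) prove the lemma exactly as stated by taking a spatial profile that decays faster than any power at the edge of its support, e.g.\ modelled on $s\mapsto e^{-1/s}$ near $r=R$, for which the ratios $R\,|\phi'|/\phi^{\epsilon}$ and $R^{2}|\phi''|/\phi^{\epsilon}$ stay bounded for all $\epsilon\in(0,1)$ simultaneously.
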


\begin{theorem}\label{th_main}
Let $u$ be a positive bounded solution of \eqref{heat} with $0 <u \le \delta$ in ${Q}_{R,T}$. For some $k_m=(m-1)\tilde{k},\ \tilde{h}\ge 0$, $R\ge 2$ and $T>0$ if $Ric_f^m \ge -k_m g$ and $\partial_t g \ge -2\tilde{h}g$ in $Q_{R,T}$  then there exists $C>0$ depending on $n$ and $m$ such that for every $(x,t) \in Q_{R/2,T}$ with $ t > 0$ we have
\begin{align}\label{main}
\nonumber \frac{|\nabla u|}{u}\le& C \bigg[\frac{1}{R}+\sqrt{\frac{[\gamma_{\Delta_f}]_+}{R}}+\sqrt{\frac{1}{t}}+\sqrt{\mathcal{H}}+N_p^2+N_q^2+M_A+M_B+\sqrt{M_\Gamma}\\
&\ \ \ \ +\left(\frac{|A(u)|}{u}\right)^{2/3}+\left(\frac{|B(u)|}{u}\right)^{2/3}\bigg]\bigg(1-\log(\frac{u}{\delta})\bigg),
\end{align}
where
\begin{eqnarray}
		\mathcal{H}=\sqrt{\tilde{h}^2+\tilde{k}^2}\ \ , \ \ k_m=(m-1)\tilde{k},\\
		\label{Nq}	N_q=\sup_{{Q}_{R,T}}\left[q_+^{1/2}+|\nabla q|^{1/3}\right], q_+=[q(x,t)]_+,\\
		\label{Np} N_p=\sup_{{Q}_{R,T}}\left[p_+^{1/2}+|\nabla p|^{1/3}\right], p_+=[p(x,t)]_+,\\
		\gamma_{\Delta_f}=\max_{(x,t)}\{\Delta_f r(x,t):d(x,x_0;t)=1,\ 0 \le t\le T\},\\
	\nonumber	[\gamma_{\Delta_f}]_+=max(\gamma_{\Delta_f},0),\\
		\label{M_gamma} M_\mathcal{G}(u)=\sup_{{Q}_{R,T}}\left\{\left[\mathcal{G}'(u)
		+\frac{\log(\frac{u}{D})\mathcal{G}(u)}{u(1-log(\frac{u}{D}))}\right]_+\right\},\\
		\label{M_A} M_A(u)=\sup_{{Q}_{R,T}}\left\{\left[A'(u)
		+\frac{\log(\frac{u}{D})A(u)}{u(1-log(\frac{u}{D}))}\right]_+\right\},\\
		\label{M_B} M_B(u)=\sup_{{Q}_{R,T}}\left\{\left[B'(u)
		+\frac{\log(\frac{u}{D})B(u)}{u(1-log(\frac{u}{D}))}\right]_+\right\}.
	\end{eqnarray}
When $m<\infty$, the term $\sqrt{\frac{[\gamma_{\Delta_f}]_{+}}{R}}$ in \eqref{main} can be avoided.
\end{theorem}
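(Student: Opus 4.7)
The plan is to apply a Souplet--Zhang type localization argument to the function $\psi w$ on the parabolic cylinder $Q_{R,T}$, where $h=\log(u/\delta)\le 0$, $w=|\nabla h|^2/(1-h)^2$, and $\psi(x,t)=\bar\psi(d(x,x_0;t),t)$ is the cut-off supplied by Lemma \ref{lemma 3.3}. The previous lemma already reduces the analytic content to controlling the differential inequality \eqref{2.14}, whose leading negative term $-2(1-h)w^2\le -2w^2$ is coercive. All remaining terms on the right of \eqref{2.14} are either linear in $w$, sublinear in $w$ (through $|\nabla h|=(1-h)\sqrt w$), or of the form $\langle\nabla h,\nabla w\rangle$; the strategy is to absorb each of them into the coercive term by Cauchy--Schwarz and Young's inequalities.

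For the maximum principle step, since $\bar\psi$ is compactly supported in space and vanishes at $t=0$, the function $\psi w$ attains its maximum on $\overline{Q_{R,T}}$; if the maximum is $0$ the estimate is trivial, otherwise it is attained at an interior point $(x_1,t_1)$ with $t_1>0$ and $d(x_1,x_0;t_1)<R$. There $\nabla(\psi w)=0$, $\Delta_f(\psi w)\le 0$, and $\partial_t(\psi w)\ge 0$, which combined with the product rule give
\begin{equation*}
0 \le \psi(\partial_t-\Delta_f)w + w(\partial_t-\Delta_f)\psi + 2\frac{w|\nabla\psi|^2}{\psi},
\end{equation*}
the last term coming from $\psi\nabla w=-w\nabla\psi$. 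I would substitute \eqref{2.14} for $(\partial_t-\Delta_f)w$, use $\psi\nabla w=-w\nabla\psi$ again on the $\langle\nabla h,\nabla w\rangle$ term, and multiply through by $\psi$ to obtain an inequality in which the negative leading piece reads $-2(1-h)\psi^2 w^2$.

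The geometric inputs and the cut-off estimates from Lemma \ref{lemma 3.3}, with $\epsilon=1/2$ to avoid degeneracy as $\psi\to 0$, then convert the derivatives of $\psi$ into the quantities on the right side of \eqref{main}: $|\nabla\psi|^2/\psi$ produces $1/R^2$; the weighted Laplacian comparison applied to $\Delta_f r$, combined with the bound on $\bar\psi_{rr}$, produces $1/R^2$ together with the correction $\sqrt{[\gamma_{\Delta_f}]_+/R}$ that is only needed when $m=\infty$; the evolution of the distance through $\partial_t g\ge -2\tilde h g$ together with the bound on $\bar\psi_t$ gives contributions of size $1/t$ and $\tilde h$, which when merged with the $2kw$ term from \eqref{2.14} coalesce into the $\sqrt{\mathcal H}$ term. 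The source-dependent terms are handled via \eqref{M_A}--\eqref{M_gamma}: the $A'$, $B'$, $\mathcal G'$ pieces are immediately controlled by $M_A, M_B, M_{\mathcal G}$, while the gradient couplings $\frac{2A(\delta e^h)\langle\nabla h,\nabla p\rangle}{\delta e^h(1-h)^2}$ are rewritten using $|\nabla h|=(1-h)\sqrt w$ and estimated by a three-factor Young inequality that balances $\psi^2 w^2$ against $(|A(u)|/u)^{4/3}|\nabla p|^{2}$, producing after square-rooting the $(|A(u)|/u)^{2/3}$ and $N_p^2$ contributions; the $B,q$ terms are handled identically.

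The principal obstacle is bookkeeping: the Young parameters must be chosen so that the total coefficient of $\psi^2 w^2$ on the right-hand side is strictly less than $2$, leaving a genuine $-c\,\psi^2 w^2$ on the left after absorption. Once this is arranged, the resulting inequality is a quadratic bound $\psi^2 w^2(x_1,t_1)\le C[\text{expression in }\eqref{main}]^2$; taking square roots and specializing to any $(x,t)\in Q_{R/2,T}$ with $t>0$, where $\psi\equiv 1$, and finally translating $w$ back to $|\nabla u|/u$ via $u=\delta e^h$ and $1-h=1-\log(u/\delta)$, yields \eqref{main}. The closing remark for finite $m$ follows because then the $m$-Bakry--\'Emery Laplacian comparison controls $\Delta_f r$ intrinsically in terms of $k_m$, so no external $\gamma_{\Delta_f}$ correction is needed.
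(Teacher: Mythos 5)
Your overall strategy mirrors the paper's: localize $w=|\nabla h|^2/(1-h)^2$ with the Souplet--Zhang cut-off $\psi$, evaluate $(\partial_t-\Delta_f)(\psi w)$ at a maximum point $(x_1,t_1)$ using $\nabla(\psi w)=0$, $\Delta_f(\psi w)\le 0$, $\partial_t(\psi w)\ge 0$, then absorb everything into the coercive $-2(1-h)\psi w^2$ piece via Cauchy--Schwarz and Young. Your maximum-point identity $0\le \psi(\partial_t-\Delta_f)w+w(\partial_t-\Delta_f)\psi+2w|\nabla\psi|^2/\psi$ and the conversion of $\psi\nabla w=-w\nabla\psi$ on the $\langle\nabla h,\nabla w\rangle$ term are exactly what the paper does, and your Young-inequality bookkeeping for the $A,B,\mathcal G,p,q$ terms lands on the same powers ($N_p^2$, $(|A(u)|/u)^{2/3}$, etc.).

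The one concrete gap: you apply the Bakry--\'Emery Laplacian comparison for $\Delta_f r$ at the maximum point without checking where that point lies, but the $m=\infty$ comparison you invoke (the one that introduces $[\gamma_{\Delta_f}]_+$, from Wei--Wylie) is valid only for $r\ge 1$. If $d(x_1,x_0;t_1)<1$, that bound is simply not available, and your chain of inequalities breaks. The paper handles this by splitting into two cases: when $d(x_1,x_0;t_1)\ge 1$ the comparison theorem applies, and when $d(x_1,x_0;t_1)\le 1\le R/2$ the cut-off satisfies $\bar\psi_r=\bar\psi_{rr}=0$ there (by Lemma \ref{lemma 3.3}(ii) with $R\ge 2$), so the $\Delta_f\psi$ and $\nabla\psi$ contributions vanish altogether and a much shorter estimate, containing no $R$- or curvature-comparison terms, suffices. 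The fix is easy and doesn't change the structure, but without it the argument as written would not go through at small $r$; you should make this case distinction explicit before citing the comparison theorem.

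A smaller remark: the paper does not literally multiply the inequality through by $\psi$ to reach $\psi^2 w^2$; it divides by $2(1-h)$ and works with $\psi w^2$, then uses $[\psi w]^2(x,t)\le [\psi w]^2(x_1,t_1)\le [\psi w^2](x_1,t_1)$ to close. Your variant also works, but matching the paper's normalization keeps the Young-exponent bookkeeping a bit cleaner when converting $\psi^{3/4}w^{3/2}$ type cross-terms.
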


\begin{proof}
We now show that the estimate in the theorem	holds for all $(x, \tau)$ satisfying $d(x,x_0;\tau)\le R/2$.  As $\tau$ is arbitrary so the assertion is justified for all $(x,t)$ in $Q_{R/2,T}$ with $t>0$. Here for fixed $\tau \in (0,T] $ we consider the function $\psi(x,t)= \bar{\psi}(r(x,t),t)$, where $\bar{\psi}$ satisfies all the properties of Lemma \ref{lemma 3.3}.\\
First, we note that a direct computation using \eqref{eq_Del_uv} of Lemma \ref{lemma1} yields
\begin{equation}\label{eq_new1}
	(\partial_t-\Delta_f)(\psi w)=\psi(\partial_t-\Delta_f)w-2\langle \nabla \psi,\nabla w \rangle+w(\partial_t-\Delta_f)\psi.
\end{equation}
Similarly (by direct computation)	
\begin{equation}
	\nonumber \langle \nabla \psi,\nabla(\psi w )\rangle= \psi \langle \nabla \psi, \nabla w \rangle + w\langle \nabla \psi, \nabla \psi \rangle,
\end{equation}
which implies
\begin{equation}\label{eq_new2}
	\langle \nabla \psi, \nabla w \rangle = \frac{1}{\psi}\langle \nabla \psi, \nabla (\psi w)-w\nabla \psi \rangle.
\end{equation}
Now combining \eqref{eq_new1} and \eqref{eq_new2} we derive the evolution for the quantity $(\psi w)$ as follows
\begin{equation}\label{eq_new3}
	(\partial_t-\Delta_f)(\psi w) = \psi (\partial_t-\Delta_f)w-\frac{2}{w}\langle \nabla \psi ,\nabla (\psi w)-w\nabla \psi \rangle + w(\partial_t-\Delta_f)\psi.
\end{equation}
To begin the proof we combine \eqref{2.14} and \eqref{eq_new3} and get
	\begin{align}
		\nonumber (\partial_t-\Delta_f)(\psi w)\le& \frac{-2h\langle \psi\nabla h,\nabla w\rangle}{(1-h)}-2(1-h)\psi w^2+2k\psi w-\frac{2}{\psi}\langle \nabla\psi,\nabla(\psi w)-w\nabla\psi\rangle\\
		\nonumber & + w(\partial_t-\Delta_f)\psi +\frac{2\psi A(\delta e^h)\langle \nabla h,\nabla p \rangle}{\delta e^h(1-h)^2}+\frac{2\psi B(\delta e^h)\langle \nabla h,\nabla q \rangle}{\delta e^h(1-h)^2}\\
		\nonumber &+2\psi w\Bigg[A'(\delta e^h)p+\frac{h}{(1-h)}\frac{A(\delta e^h)p}{\delta e^h}+B'(\delta e^h)q+\frac{h}{(1-h)}\frac{B(\delta e^h)q}{\delta e^h}\\
		&\ \ \ \ \ \ \ \ \ \ \ \ +\mathcal{G}'(\delta e^h)+\frac{h}{(1-h)}\frac{\mathcal{G} (\delta e^h)}{\delta e^h}\Bigg].
	\end{align} 
\\ Since $\langle \psi \nabla h,\nabla w\rangle=\langle \nabla h,\nabla(\psi w)\rangle-w\langle \nabla h,\nabla \psi\rangle$, hence we have
	\begin{align} \label{2.17}
		\nonumber &(\partial_t-\Delta_f)(\psi w)\\
		\nonumber &\le \frac{-2h\langle \nabla h,\nabla(\psi w)\rangle}{(1-h)}+\frac{2hw\langle \nabla h,\nabla \psi\rangle}{(1-h)}-2(1-h)\psi w^2
		-\langle \frac{2\nabla\psi}{\psi},\nabla(\psi w)-w\nabla\psi\rangle\\
		\nonumber &\ \ \ +2k\psi w +w\partial_t\psi-w\Delta_f\psi+\frac{2\psi A(\delta e^h)\langle \nabla h,\nabla p \rangle}{\delta e^h(1-h)^2}+\frac{2\psi B(\delta e^h)\langle \nabla h,\nabla q \rangle}{\delta e^h(1-h)^2}\\
		\nonumber &\ \ \ +2\psi w\Bigg[A'(\delta e^h)p+\frac{h}{(1-h)}\frac{A(\delta e^h)p}{\delta e^h}+B'(\delta e^h)q+\frac{h}{(1-h)}\frac{B(\delta e^h)q}{\delta e^h}\\
		&\ \ \ \ \ \ \ \ \ \ \ \ \ +\mathcal{G}'(\delta e^h)+\frac{h}{(1-h)}\frac{\mathcal{G} (\delta e^h)}{\delta e^h}\Bigg].
	\end{align}
Let us assume that, at the point $(x_1,t_1)$, the quantity $\psi w$ attains its maximum on $Q_{R,T}$. By Calabi's \cite{CALABI} argument, without loss of generality it can be assumed that $x_1$ is not on the cut locus of $M$. We also assume that $(\psi w)(x_1,t_1) > 0$ as otherwise the desired result becomes trivial with $w(x,\tau)\le 0$ whenever $d(x,x_0;\tau)\le {R/2}$. Thus in particular $t_1>0$ by $(iii)$ of Lemma \ref{lemma 3.3} and at $(x_1,t_1)$ we have $\Delta_f(\psi w)\le 0$, $\partial_t(\psi w)\ge 0$ and $\nabla(\psi w)=0$. So From \eqref{2.17} we find that
	\begin{align} \label{2.18}
		\nonumber \psi w^2\le& \frac{hw\langle \nabla h,\nabla \psi\rangle}{(1-h)^2}+\frac{w |\nabla\psi|^2}{\psi(1-h)}+\frac{\psi A(\delta e^h)\langle \nabla h,\nabla p \rangle}{\delta e^h(1-h)^3}+\frac{\psi B(\delta e^h)\langle \nabla h,\nabla q \rangle}{\delta e^h(1-h)^3}\\
		\nonumber &+\frac{\psi w}{(1-h)}\Bigg[A'(\delta e^h)p+\frac{h}{(1-h)}\frac{A(\delta e^h)p}{\delta e^h}+B'(\delta e^h)q+\frac{h}{(1-h)}\frac{B(\delta e^h)q}{\delta e^h}\\
		&+\mathcal{G}'(\delta e^h)+\frac{h}{(1-h)}\frac{\mathcal{G} (\delta e^h)}{\delta e^h}\Bigg]+\frac{k\psi w}{(1-h)}+\frac{w\partial_t\psi}{2(1-h)}-\frac{w\Delta_f \psi}{2(1-h)}.
	\end{align}
	We now consider two cases separated according as to whether $d(x_1,x_0;t_1)\ge 1$ or $d(x_1,x_0;t_1)\le 1$ respectively.\\
	\textbf{Case 1.}
	{$d(x,x_0;t_1)\ge 1$}\\
	 In this case we obtain the upper bounds for each of the terms on the right hand side of \eqref{2.18} by appling properties (i)-(iv) of Lemma \ref{lemma 3.3}, Cauchy-Schwartz inequality and Young inequalities respectively.  For the first two terms noting that $-1<\frac{h}{1-h}\le 0$ and $\frac{1}{1-h}\le 1$ we can write
	\begin{eqnarray}\label{2.19}
		\nonumber \frac{hw\langle \nabla h,\nabla \psi\rangle}{(1-h)^2}+\frac{w |\nabla\psi|^2}{\psi(1-h)}&\le& \frac{|h||\nabla h|}{(1-h)^2}|\nabla\psi|w+\frac{\sqrt{\psi}w|\nabla\psi|^2}{(1-h)\psi^{3/2}}\\
		\nonumber &\le& \psi^{3/4} w^{3/2}\frac{|\nabla\psi|}{\psi^{3/4}}+\sqrt{\psi}w\frac{|\nabla\psi|^2}{\psi^{3/2}}
		\\
		&\le& \frac{1}{5}\psi w^2+\frac{C}{R^4}.
	\end{eqnarray}
We first estimate the terms related to $p$ using \eqref{Np} and \eqref{M_A} with Cauchy-Schwartz and Young inequality as follows
	\begin{eqnarray}
		\nonumber &&\frac{\psi A(\delta e^h)\langle \nabla h,\nabla p \rangle}{\delta e^h(1-h)^3}+\frac{p\psi wA(\delta e^h)h}{\delta e^h(1-h)^2}+\frac{p\psi w A'(\delta e^h)}{(1-h)}\\
		\nonumber &=&\frac{\psi A(\delta e^h)\langle \nabla h,\nabla p \rangle}{\delta e^h(1-h)^3}+\frac{p\psi w}{(1-h)}\bigg[A'(\delta e^h)+\frac{h}{1-h}\frac{A(\delta e^h)}{1-h}\bigg]\\
		\nonumber &\le& \frac{\psi^{1/4}|A(\delta e^h)|}{\delta e^h}\frac{\sqrt{w}|\nabla p|}{(1-h)^2}\psi^{3/4}
		+\frac{\sqrt{\psi}w p_+}{(1-h)^2}\sqrt{\psi}M_A\\
		\nonumber &\le& \frac{1}{10}\psi w^2+C\left[|\nabla p|^{8/3}+p_+^4\right]
		+\frac{1}{2}\left[\left(\frac{|A(\delta e^h)|}{\delta e^h}\right)^{8/3}+M_A^4\right]\\
		\label{eq_p}
		 &\le& \frac{1}{10}\psi w^2+CN_p^8+\frac{1}{2}\left[\left(\frac{|A(\delta e^h)|}{\delta e^h}\right)^{8/3}+M_A^4\right].
	\end{eqnarray}
	Using symmetry we deduce
	\begin{eqnarray}
		\nonumber &&\frac{\psi B(\delta e^h)\langle \nabla h,\nabla q \rangle}{\delta e^h(1-h)^3}+\frac{q\psi wB(\delta e^h)h}{\delta e^h(1-h)^2}+\frac{q\psi w B'(\delta e^h)}{(1-h)}\\
		\label{eq_q}
		 &&\le \frac{1}{10}\psi w^2+CN_q^8+\frac{1}{2}\left[\left(\frac{|B(\delta e^h)|}{\delta e^h}\right)^{8/3}+M_B^4\right].
	\end{eqnarray} 
	Combining \eqref{eq_p} and \eqref{eq_q} we infer the estimation for the terms involving $p$ and $q$ as
	\begin{align}
		\nonumber &\frac{\psi A(\delta e^h)\langle \nabla h,\nabla p \rangle}{\delta e^h(1-h)^3}+\frac{p\psi wA(\delta e^h)h}{\delta e^h(1-h)^2}+\frac{p\psi w A'(\delta e^h)}{(1-h)}+\frac{\psi B(\delta e^h)\langle \nabla h,\nabla q \rangle}{\delta e^h(1-h)^3}\\
		\nonumber&+\frac{q\psi wB(\delta e^h)h}{\delta e^h(1-h)^2}+\frac{q\psi w B'(\delta e^h)}{(1-h)}\\
		&\le \frac15 \psi w^2+C[N_p^8+N_q^8]+\frac12\left[M_A^4+M_B^4+\left(\frac{|A(\delta e^h)|}{\delta e^h}\right)^{\frac83}+\left(\frac{|B(\delta e^h)|}{\delta e^h}\right)^{\frac83}\right].
	\end{align}
Next for the terms involving $\mathcal{G}$, using \eqref{M_gamma} with Young inequality, we get
	\begin{equation}
\frac{\psi w}{(1-h)}\left[\mathcal{G}'(\delta e^h)+\frac{h}{(1-h)}\frac{\mathcal{G} (\delta e^h)}{\delta e^h}\right] \le \frac{\sqrt{\psi}w}{1-h}M_\mathcal{G} \le \sqrt{\psi} w M_\mathcal{G}
\le \frac15 \psi w^2+CM_\mathcal{G}^2.
	\end{equation}
	For the term $\frac{k\psi w}{1-h}$, using $k=\tilde{h}+(m-1)\tilde{k}$, we find the estimate as
	\begin{equation}
\frac{k\psi w}{1-h}\le \frac{\sqrt{\psi}w}{1-h}k_+\sqrt{\psi}\le \frac{\psi w^2}{15}+Ck^2.
	\end{equation}
Next for the term involving $\Delta_f\psi$ we use generalized Laplacian comparison theorem and the lower bound of $Ric_f^m(g)$. For this we consider two cases $n \le m < \infty$ and $m=\infty$. As lower bound on $Ric_f$ is a weaker assumption than a lower bound on $Ric_f^m$ we get a slightly sharper estimate in the latter case by avoiding the term $[\gamma_{\Delta_f}]_+$as conclusion.\\
	\textbf{Step 1}\ $(n \le m <\infty)$\\
By the generalized Laplacian theorem we have $\Delta_f r\le (m-1)\sqrt{\tilde{k}}\coth(\sqrt{\tilde{k}}r)$.
	As $\psi=\bar{\psi}(r(x,t),t)$, so $\Delta_f \psi=\bar{\psi}_{rr}|\nabla r|^2+\bar{\psi}_r\Delta_fr$. Noting that $\bar{\psi_r}\le 0$
\begin{equation}
\Delta_f\psi\ge\bar{\psi}_{rr}+(m-1)\bar{\psi}_r\sqrt{\tilde{k}}\coth(\sqrt{\tilde{k}}r).
\end{equation}
Now $\coth(\sqrt{\tilde{k}}r)\le \coth(\frac{\sqrt{\tilde{k}}R}{2})$ and $
	\sqrt {\tilde{k}} \coth(\frac{\sqrt{\tilde{k}}R}{2})\le (\frac{2}{R}+\sqrt{\tilde{k}})$ as $\frac{R}{2}\le r\le R$. Also $\bar{\psi}_r=0$ for $ 0\le r\le \frac{R}{2}$. Hence
	\begin{eqnarray}
		\nonumber -\Delta_f\psi&\le& -\bigg[\bar{\psi}_{rr}+(m-1)\bar{\psi}_r\sqrt{\tilde{k}}\coth(\frac{\sqrt{\tilde{k}}R}{2})\bigg]\\
		\nonumber &\le& -\bigg[\bar{\psi}_{rr}+(m-1)\bar{\psi}_r\left(\frac{2}{R}+\sqrt{\tilde{k}}\right)\bigg]\\
		&\le& |\bar{\psi}_{rr}|+(m-1)|\bar{\psi}_r|\left(\frac{2}{R}+\sqrt{\tilde{k}}\right).
	\end{eqnarray}
	Using (iv) of Lemma \ref{lemma 3.3} we see that
	\begin{eqnarray}
		\nonumber -\frac{w\Delta_f\psi}{2(1-h)}&\le& \frac{w\sqrt{\bar{\psi}}}{2(1-h)}
		\bigg[\frac{|\bar{\psi}_{rr}|}{\sqrt{\bar{\psi}}}+(m-1)\left(\frac{2}{R}+\sqrt{\tilde{k}}\right)\frac{|\bar{\psi}_r|}{\sqrt{\bar{\psi}}}\bigg]\\
		\nonumber &\le& \frac{w\sqrt{\bar{\psi}}}{2}\bigg[\frac{|\bar{\psi}_{rr}|}{\sqrt{\bar{\psi}}}+m\left(\frac{2}{R}+\sqrt{\tilde{k}}\right)\frac{|\bar{\psi}_r|}{\sqrt{\bar{\psi}}}\bigg]\\
		\nonumber &\le& \frac{1}{15}\psi w^2+C\bigg[\frac{|\bar{\psi}_{rr}|^2}{{\bar{\psi}}}+m^2\left(\frac{1}{R^2}+\tilde{k}\right)\frac{|\bar{\psi}_r|^2}{{\bar{\psi}}}\bigg]\\
		&\le& \frac{1}{15}\psi w^2+C\frac{m^2}{R^2}\left(\frac{1+\tilde{k}R^2}{R^2}\right).
	\end{eqnarray}
\textbf{Step 2}\ $(m=\infty)$\\
Note that $Ric_f(g)\ge-(n-1)\tilde{k}g$ with $ \tilde{k} \ge 0$,  $[\gamma_{\Delta_f}]_+=\max\{\gamma_{\Delta_f},0\}$ and (2.2) in Theorem 3.1 of \cite{Comparison Geometry} gives that $\Delta_fr(x,t)\le [\gamma_{\Delta_f}]_+(n-1)\tilde{k}(R-1) \text{ for }r\ge1$. As $\psi=\bar{\psi}(r(x,t),t)$ so $\Delta_f \psi=\bar{\psi}_{rr}|\nabla r|^2+\bar{\psi}_r\Delta_fr$. Also as $\bar{\psi_r}\le 0$, hence $-\Delta_f\psi\ge \bar{\psi_{rr}}+\bar{\psi_r}\{[\gamma_{\Delta_f}]_++(n-1)\tilde{k}(R-1)\}$. Therefore combining these results we have
	\begin{eqnarray}
\nonumber -\Delta_f\psi&\le& -\bar{\psi}_{rr}-\bar{\psi}_r\{[\gamma_{\Delta_f}]_++(n-1)\tilde{k}(R-1)\}\\
\label{3.31}&\le& |\bar{\psi}_{rr}|+|\bar{\psi_r}|\{[\gamma_{\Delta_f}]_++(n-1)\tilde{k}(R-1)\}.
	\end{eqnarray} 
Using the above results and Lemma \ref{lemma 3.3} with Young inequality  we can derive the bound for the term involving $\Delta_f\psi$ in \eqref{2.18} as
	\begin{eqnarray}
		\nonumber -\frac{w\Delta_f\psi}{2(1-h)}&\le& \frac{w\sqrt{\bar{\psi}}}{2(1-h)}
		\bigg[\frac{|\bar{\psi}_{rr}|}{\sqrt{\bar{\psi}}}+\frac{|\bar{\psi_r}|}{\sqrt{\bar{\psi}}}\{[\gamma_{\Delta_f}]_++(n-1)\tilde{k}(R-1)\}\bigg]\\
		\nonumber &\le& \frac{w\sqrt{\bar{\psi}}}{2}\bigg[\frac{|\bar{\psi}_{rr}|}{\sqrt{\bar{\psi}}}+\frac{|\bar{\psi_r}|}{\sqrt{\bar{\psi}}}\{[\gamma_{\Delta_f}]_++n\tilde{k}R\}\bigg]\\
		\nonumber &\le& \frac{1}{15}\psi w^2+C\bigg[\frac{{|\bar{\psi}_{rr}}^2|}{\bar{\psi}}+
		\frac{|\bar{\psi_r}|^2}{\bar{\psi}}\{[\gamma_{\Delta_f}]_+^2 +n^2\tilde{k}^2R^2\}\bigg]\\
		&\le& \frac{1}{15}\psi w^2+\frac{Cn^2}{R^2}\bigg[\frac{1}{R^2}+[\gamma_{\Delta_f}]_+{^2}+\tilde{k}^2R^2\bigg].
	\end{eqnarray}
Next we proceed to find a bound involving the term $\partial_t\psi$. For $d(x_1,x_0;t)\le R$ and fixed $t>0$, let $\phi=\phi(s):[0,d]\to M$ be a geodesic with respect to $g(t)$ connecting $x_0=\phi(0)$ and $ x_1=\phi(d)$. Using $\partial_tg\ge -2\tilde{h}g$ in $Q_{R,T}$ with $\tilde{h}\ge$ 0, we infer
\begin{eqnarray}
\nonumber \partial_tr(x_1,t)&=& \partial_t\int_{0}^{d}|\phi'(s)|_{g(t)}ds\\
\nonumber &=& \int_{0}^{d}\frac{(\partial_tg)(\phi',\phi')}{2\sqrt{g(\phi',\phi')}} ds\\
\nonumber &\ge& \int_{0}^{d}\frac{-2\tilde{h}g(\phi',\phi')}{2|\phi'|_{g(t)}}ds\\
&=& \int_{0}^{d}-\tilde{h}|\phi'|_{g(t)}ds\ge-\tilde{h}r(x_1,t)\ge-\tilde{h}R.
\end{eqnarray}
Using the properties of $\bar{\psi}$ as in Lemma \ref{lemma 3.3} we deduce
\begin{equation*}
\partial_t\bar{\psi}=\bar{\psi_t}+\bar{\psi_r}\partial_tr\le \bar{\psi_t}-\tilde{h}R\bar{\psi_r}\le \bigg[\frac{|\bar{\psi_r}|}{\sqrt{\bar{\psi}}}+R\frac{\tilde{h}|\bar{\psi_r}|}{\sqrt{\bar{\psi}}}\bigg]\le C(1+\tau \tilde{h})\frac{\psi}{\tau}
\end{equation*}
and
\begin{equation}
\frac{w\partial_t\psi}{2(1-h)}=\frac{\sqrt{\psi}w}{2(1-h)}\frac{\partial_t\psi}{\sqrt{\psi}}\le C\sqrt{\psi}w\frac{1+\tau \tilde{h}}{\tau}\le
\frac{1}{15}\psi w^2+C\frac{1+\tau^2{\tilde{h}^2}}{\tau}
\end{equation}
Thus finally we found bound for the last three terms in \eqref{2.18} subject to $Ric_f^m\ge -(m-1)\tilde{k}g$ and $k=\tilde{h}+(m-1)\tilde{k}$
\begin{align}
\nonumber &\frac{w[\Delta_f\psi+\partial_t\psi]}{2(1-h)}+\frac{k\psi w}{(1-h)}\\
\nonumber &\le \frac{1}{15}\psi w^2+C k^2+\frac{1}{15}\psi w^2+\frac{Cm^2}{R^2}\frac{1+\tilde{k}R^2}{R^2}+\frac{1}{15}\psi w^2+C\left[\frac{1}{\tau^2}+\tilde{k}^2\right]\\
\nonumber &\le\frac{\psi w^2}{5}+C\bigg[\frac{1}{R^4}+\frac{\tilde{k}}{R^2}+\frac{1}{\tau^2}+\tilde{h}^2+k^2\bigg]\\
&\le\frac{\psi w^2}{5}+C\bigg[\frac{1}{R^4}+\frac{1}{\tau^2}+\tilde{h}^2+\tilde{k}^2\bigg],
\end{align}
using $\frac{\tilde{k}}{R^2}\le \frac{\tilde{k}^2}{2}+\frac{1}{2R^4}$ with $C=C(m)>0$, and subject to $Ric_f(g)\ge -(n-1)\tilde{k}g$ with $ k=\tilde{h}+(n-1)\tilde{k}g$ the bound
\begin{eqnarray}
\frac{w[\Delta_f\psi+\partial_t\psi]}{2(1-h)}+\frac{k\psi w}{(1-h)}\le \frac{\psi w^2}{5}+C\bigg[\frac{1}{R^4}+\frac{[\gamma_{\Delta_f}]_+{^2}}{R^2}+\frac{1}{\tau^2}+\tilde{h}^2+\tilde{k}^2\bigg]
\end{eqnarray} with $C=C(n)>0$.
Using all the upper bounds of each term on right hand side of \eqref{2.18} we have
\begin{align} \label{bound}
		\nonumber \psi w^2\le& \frac15 \psi w^2+\frac{C}{R^4}+\frac15 \psi w^2+C[N_p^8+N_q^8]+\frac12\Bigg[M_A^4+M_B^4+\left(\frac{|A(\delta e^h)|}{\delta e^h}\right)^{8/3}\\
		\nonumber &+\left(\frac{|B(\delta e^h)|}{\delta e^h}\right)^{8/3}\Bigg]+\frac15 \psi w^2+CM_\mathcal{G}^2+\frac15 \psi w^2+C\bigg[\frac{1}{R^4}+\frac{[\gamma_{\Delta_f}]_+{^2}}{R^2}\\
		\nonumber &+\frac{1}{\tau^2}+\tilde{h}^2+\tilde{k}^2\bigg]\\
		\nonumber &\le \frac45 \psi w^2 +C\bigg[\frac{1}{R^4}+\frac{[\gamma_{\Delta_f}]_+{^2}}{R^2}+\frac{1}{\tau^2}
		+\mathcal{H}^2+N_p^8+N_q^8+M_A^4+M_B^4+M_\mathcal{G}^2\\
		\nonumber &+\left(\frac{|A(\delta e^h)|}{\delta e^h}\right)^{8/3}+\left(\frac{|B(\delta e^h)|}{\delta e^h}\right)^{8/3}\bigg].
\end{align}
This implies
\begin{align}
\psi w^2&\le C\bigg[\frac{1}{R^4}+\frac{[\gamma_{\Delta_f}]_+{^2}}{R^2}+\frac{1}{\tau^2}
+\mathcal{H}^2+N_p^8+N_q^8+M_A^4+M_B^4+M_\mathcal{G}^2\\
\nonumber&\ \ \ \ \ \ \ \ \ +\left(\frac{|A(\delta e^h)|}{\delta e^h}\right)^{8/3}+\left(\frac{|B(\delta e^h)|}{\delta e^h}\right)^{8/3}\bigg],
\end{align}
where $\mathcal{H}^2 = \tilde{h}^2 + \tilde{k}^2$. Using the maximal characterization of the point $(x_1,t_1)$ for any $(x,y) \in {Q_{R,T}}$, we get $[\psi w]^2(x,t)\le[\psi w]^2(x_1,t_1)\le [\psi w^2](x_1,t_1)$. Using this in \eqref{bound} we see that
\begin{eqnarray}
\nonumber [\psi w]^2&\le& C\bigg[\frac{1}{R^4}+\frac{[\gamma_{\Delta_f}]_+{^2}}{R^2}+\frac{1}{\tau^2}
		+\mathcal{H}^2+N_p^8+N_q^8+M_A^4+M_B^4+M_\mathcal{G}^2\\
&&\ \ \ \ \ \ + \left(\frac{|A(\delta e^h)|}{\delta e^h}\right)^{8/3}+\left(\frac{|B(\delta e^h)|}{\delta e^h}\right)^{8/3}\bigg].
\end{eqnarray}
Since $\psi(x,\tau)=1$ when $d(x,x_0;\tau)\le \frac{R}{2}$, hence at $(x,\tau)$
\begin{align}
\nonumber \frac{|\nabla u|}{u}\le& C \bigg[\frac{1}{R}+\sqrt{\frac{[\gamma_{\Delta_f}]_+}{R}}+\sqrt{\frac{1}{\tau}}+\sqrt{\mathcal{H}}+N_p^2+N_q^2+M_A+M_B+\sqrt{M_\mathcal{G}}\\
&\ \ \ \ \ \ +\left(\frac{|A(u)|}{u}\right)^{2/3}+\left(\frac{|B(u)|}{u}\right)^{2/3}\bigg]\bigg(1-\log(\frac{u}{\delta})\bigg).
\end{align}
\textbf{Case 2.} $d(x,x_0;t_1)\le 1$.\\
 By $(ii)$ of Lemma \ref{lemma 3.3} we have $\psi$ is constant in $d(x,x_0;t)\le \frac{R}{2},t\in[0,T]\text{ with }R\ge 2$ and so from \eqref{2.18} we find
\begin{align}
		\nonumber w^2\le&\bigg[\frac{ A(\delta e^h)\langle \nabla h,\nabla p \rangle}{\delta e^h(1-h)^3}+\frac{ B(\delta e^h)\langle \nabla h,\nabla q \rangle}{\delta e^h(1-h)^3}+\frac{pwhA(\delta e^h)}{\delta e^h(1-h)^2}+\frac{qwhB(\delta e^h)}{\delta e^h(1-h)^2}
		\\
		\nonumber &+\frac{pwA'(\delta e^h)}{(1-h)}\frac{qwB'(\delta e^h)}{(1-h)}+\frac{\psi w}{(1-h)}\left(\mathcal{G}'(\delta e^h)+\frac{h}{1-h}\frac{\mathcal{G}(\delta e^h)}{\delta e^h}\right)\\
		\nonumber &+\frac{w\partial_t\psi}{2\psi(1-h)}+\frac{kw}{(1-h)}\bigg]\\
		\nonumber w^{3/2}\le& \frac{|A(\delta e^h)||\nabla p|}{\delta e^h}+\frac{|B(\delta e^h)||\nabla q|}{\delta e^h}+\bigg(p_+M_A+q_{+}M_B+M_\mathcal{G}+\frac{C\psi^{-3/2}}{\tau}+k\bigg)\sqrt{w}
\end{align}
By using Young inequality we deduce
\begin{align}
&\nonumber w^{3/2}\\
\nonumber \le& \frac{|A(\delta e^h)||\nabla p|}{\delta e^h}+\frac{|B(\delta e^h)||\nabla q|}{\delta e^h}+\frac23\bigg(p_+M_A+q_{+}M_B+M_\mathcal{G}+\frac{C}{\tau}+k\bigg)^{\frac{3}{2}}+\frac{w^{\frac{3}{2}}}{3}\\
\nonumber \le& C\bigg[\frac{|A(\delta e^h)||\nabla p|}{\delta e^h}+\frac{|B(\delta e^h)||\nabla q|}{\delta e^h}+\bigg(p_+M_A+q_{+}M_B+M_\mathcal{G}+\frac{C}{\tau}+k\bigg)^{\frac{3}{2}}\bigg].
\end{align}
Hence
\begin{align}
\nonumber w \le&\ C\bigg[\left(\frac{|A(\delta e^h)||\nabla p|}{\delta e^h}\right)^{\frac{2}{3}}+\left(\frac{|B(\delta e^h)||\nabla q|}{\delta e^h}\right)^{\frac{2}{3}}\\
\nonumber &\ \ \ \ \ \ \ \ +\bigg(p_+M_A+q_{+}M_B+M_\mathcal{G}+\frac{C}{\tau}+k\bigg)\bigg]\\
\nonumber \le& C\bigg[\left(\frac{|A(\delta e^h)|}{\delta e^h}\right)^{4/3}+|\nabla p|^{4/3}+\left(\frac{|B(\delta e^h)|}{\delta e^h}\right)^{4/3}+|\nabla q|^{4/3}\bigg]\\
&+\bigg[C\bigg(p_+M_A+q_{+}M_B+M_\mathcal{G}+\frac{C}{\tau}+k\bigg)\bigg]
\end{align}
Now in $d(x,x_0;t)\le R/2,\ \tau \le t \le T$ we have $\psi=1$ so by $(i)$ of Lemma \ref{lemma 3.3} we have $w(x,\tau)\le \psi w(x,\tau)\le \psi w(x_1,t_1)\le w(x_1,t_1)$. So we have
	\begin{align}
		\nonumber w\le&C\bigg[\left(\frac{|A(\delta e^h)|}{\delta e^h}\right)^{4/3}+\left(\frac{|B(\delta e^h)|}{\delta e^h}\right)^{4/3}+N_p^4+N_q^4+M_A^2+M_B^2+{M_\mathcal{G}}+{\frac{1}{\tau}}+{k}\bigg]\\
		\nonumber \sqrt{w}\le& C\bigg[\left(\frac{|A(u)|}{u}\right)^{2/3}+\left(\frac{|B(u)|}{u}\right)^{2/3}+N_p^2+N_q^2+M_A\\
		&\ \ \ \ \ \ \ +M_B+\sqrt{M_\mathcal{G}}+\sqrt{\frac{1}{\tau}}+\sqrt{k}\bigg].
	\end{align}
As $\tau>0$ is arbitrary so by above we get a special case of \eqref{main}.
Thus we see that in either case the estimate holds true. This completes the proof.
\end{proof}
\begin{corollary}\label{Global}
Considering the assumptions of Theorem \ref{th_main} and  $Ric_f^m \ge -\text{k}_m g,\ \partial_t g \ge -2\tilde{h} g $ on $M \times [0,T]$, for any bounded positive solution $u$ of \eqref{heat} satisfying $0 <u \le D$ on $M$ and $0 <t <T $ we infer
\begin{eqnarray}
\nonumber \frac{|\nabla u|}{u}&\le& C \bigg[\sqrt{\frac{1}{t}}+\sqrt{\mathcal{H}}+N_p^2+N_q^2+M_A+M_B+\sqrt{M_\mathcal{G}}\\
&&\ \ \ \ \ \  + \left(\frac{|A(u)|}{u}\right)^{2/3}+\left(\frac{|B(u)|}{u}\right)^{2/3}\bigg]\bigg(1-\log(\frac{u}{\delta})\bigg)
\end{eqnarray}
Here the supremum is taken over $M \times [0,T]$ and it is assumed to be finite.
\end{corollary}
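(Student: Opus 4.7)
The plan is to deduce the global estimate of Corollary \ref{Global} from the local estimate of Theorem \ref{th_main} by letting $R \to \infty$. Since the hypotheses of Theorem \ref{th_main} are localized to $Q_{R,T}$ while Corollary \ref{Global} assumes the same geometric bounds hold on the entire space-time cylinder $M \times [0,T]$, the hypotheses of Theorem \ref{th_main} are automatically satisfied for every $R \ge 2$.

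More precisely, fix an arbitrary base point $x_0 \in M$ and an arbitrary $(x,t) \in M \times (0,T]$. For every $R \ge 2d(x,x_0;t)$ the point $(x,t)$ lies in $Q_{R/2,T}$, so Theorem \ref{th_main} applies and yields
\begin{align*}
\frac{|\nabla u|}{u}(x,t) \le& \, C \bigg[\frac{1}{R}+\sqrt{\frac{[\gamma_{\Delta_f}]_+}{R}}+\sqrt{\frac{1}{t}}+\sqrt{\mathcal{H}}+N_p^2+N_q^2+M_A+M_B+\sqrt{M_\mathcal{G}}\\
&\qquad+\left(\frac{|A(u)|}{u}\right)^{2/3}+\left(\frac{|B(u)|}{u}\right)^{2/3}\bigg]\bigl(1-\log(u/\delta)\bigr),
\end{align*}
where the quantities $N_p, N_q, M_A, M_B, M_{\mathcal{G}}$ are for the moment taken as suprema over $Q_{R,T}$.

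Next I would observe that each of these suprema over $Q_{R,T}$ is dominated by the corresponding supremum over $M\times[0,T]$, and by assumption all of the latter are finite. Hence the right-hand side is bounded above, uniformly in $R$, by the same expression with the suprema taken over the full cylinder $M\times[0,T]$, plus the two decaying geometric terms $C/R$ and $C\sqrt{[\gamma_{\Delta_f}]_+/R}$. Letting $R \to \infty$, both of these tend to zero, and one obtains the claimed global bound at the arbitrary point $(x,t)$.

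The only point requiring mild care is the finiteness of the supremal quantities $N_p, N_q, M_A, M_B, M_{\mathcal{G}}$ over the whole cylinder; this is hypothesized in the statement of the corollary and so does not constitute a genuine obstacle. Apart from that, the argument is a standard exhaustion of $M$ by the sets $Q_{R,T}$, and so the proof reduces to the routine passage $R \to \infty$ in the estimate of Theorem \ref{th_main}.
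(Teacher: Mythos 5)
Your proposal is correct and takes essentially the same approach as the paper: the paper's proof is a one-line remark that the estimate of Theorem \ref{th_main} holds for every $R\ge 2$ with constants independent of $R$, so letting $R\to\infty$ kills the $C/R$ and $C\sqrt{[\gamma_{\Delta_f}]_+/R}$ terms and upgrades the local suprema to suprema over $M\times[0,T]$. You merely spell out the exhaustion step and the monotonicity of suprema, which the paper leaves implicit.
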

\begin{proof}
As the estimate is true for every $R \ge 2$ and as the constants are independent of $R$, by making $R \to \infty$ in \eqref{main} we get the desired result. 
\end{proof}
\section{Application}
\subsection{Parabolic Harnack Inequalities}
In this section we discuss consequences of the Theorem \ref{th_main}. Here we shall prove some Harnack inequality and a global gradient estimate with dimension free constants.
\begin{theorem}
Let $u$ be a positive bounded solution of \eqref{heat} with $0 <u \le \delta$ in $M \times [0,T]$. Let $Ric_f^m\ge -k_m g$ and $\partial_t g\ge -2\tilde{h}g$ in $M \times [0,T]$ for some $k_m,\ \tilde{h} \ge 0$, and $T>0$. For any $x_1,\ x_2\in M,t\in (0,T]$ we have
\begin{equation}
\frac{u(x_1,t)}{e\delta}\le\left[\frac{u(x_2,t)}{e\delta}\right]^\alpha,
\end{equation}
where
\begin{align*}
\alpha=\exp\biggl\{-Cd(x_1,x_2;t)\bigg[\sqrt{\frac{1}{t}}+\sqrt{\mathcal{H}}+N_p^2+N_q^2+M_A+M_B+\sqrt{M_\mathcal{G}}\\
+\left(\frac{|A(u)|}{u}\right)^{2/3}+\left(\frac{|B(u)|}{u}\right)^{2/3}\bigg]\biggr\}
	\end{align*}
\end{theorem}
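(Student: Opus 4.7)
The plan is to deduce the Harnack inequality directly from the global gradient estimate of Corollary \ref{Global} by integrating along a minimizing geodesic with respect to $g(t)$ at the fixed time $t$. The key observation is that the factor $(1-\log(u/\delta))$ appearing on the right-hand side of the gradient estimate can be absorbed by introducing a logarithmic substitution that converts the estimate into a bound on $|\nabla\log\phi|$ for a suitable positive function $\phi$.

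To set this up, I would define
\begin{equation*}
\phi(x,t) := 1-\log\!\left(\frac{u(x,t)}{\delta}\right) = -\log\!\left(\frac{u(x,t)}{e\delta}\right).
\end{equation*}
Since $0<u\le\delta$ we have $\phi\ge 1>0$, so $\log\phi$ is well-defined. A direct computation gives $\nabla\phi=-\nabla u/u$, hence $|\nabla\log\phi|=|\nabla\phi|/\phi=(|\nabla u|/u)\phi^{-1}$. Denoting by $K(x,t)$ the bracketed sum in Corollary \ref{Global}, namely
\begin{equation*}
K = \sqrt{1/t}+\sqrt{\mathcal{H}}+N_p^2+N_q^2+M_A+M_B+\sqrt{M_\mathcal{G}}+\left(\tfrac{|A(u)|}{u}\right)^{2/3}+\left(\tfrac{|B(u)|}{u}\right)^{2/3},
\end{equation*}
the global gradient estimate rewrites compactly as $|\nabla\log\phi|\le CK$.

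Next, for fixed $t\in(0,T]$ and $x_1,x_2\in M$, I would choose a minimizing geodesic $\gamma:[0,d]\to M$ with respect to the metric $g(t)$ joining $\gamma(0)=x_2$ and $\gamma(d)=x_1$, where $d=d(x_1,x_2;t)$. Integrating the inequality $|\nabla\log\phi|\le CK$ along $\gamma$ and applying the fundamental theorem of calculus yields
\begin{equation*}
|\log\phi(x_1,t)-\log\phi(x_2,t)|\le\int_0^d|\nabla\log\phi|(\gamma(s),t)\,ds\le CKd.
\end{equation*}
In particular, $\log\phi(x_1,t)\ge\log\phi(x_2,t)-CKd$, equivalently $\phi(x_1,t)\ge e^{-CKd}\phi(x_2,t)$. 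Unwinding the definition of $\phi$ and exponentiating gives $-\log(u(x_1,t)/(e\delta))\ge\alpha\bigl(-\log(u(x_2,t)/(e\delta))\bigr)$ with $\alpha=e^{-CKd}$, which after applying $\exp$ is precisely the claimed multiplicative Harnack bound.

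There is no serious obstacle here, as the analytic heavy lifting has already been done in Theorem \ref{th_main} and Corollary \ref{Global}; the only point requiring a moment of care is the sign bookkeeping that converts the additive bound on $\log\phi$ into the exponent $\alpha\in(0,1)$ in the final multiplicative inequality, a step which works cleanly precisely because $u\le\delta$ forces $u/(e\delta)\le 1/e<1$ and hence $\log(u/(e\delta))<0$ throughout.
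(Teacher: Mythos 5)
Your proof is correct and follows essentially the same route as the paper: integrate the global gradient estimate of Corollary \ref{Global} along a minimizing $g(t)$-geodesic between the two points and exponentiate. Your substitution $\phi = 1-\log(u/\delta)$ and bound $|\nabla\log\phi|\le CK$ is just a cosmetic repackaging of the paper's computation with $h=\log(u/\delta)$ and $\log(1-h)$, and the sign bookkeeping at the end matches the paper's conclusion.
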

\begin{proof}
	Let $x_1,\ x_2\in M$, $0 < t <T$ and also consider $\phi=\phi(s), \text{ with } 0\le s \le 1$, to be the geodesic curve joining $x_1,\ x_2$ in $M$ satisfying $\phi(0)=x_1$ and $\phi(1)=x_2$. Also let $N_p\ N_q,\ M_A,\ M_B,\ M_\mathcal{G}$ all are finite otherwise the proof will be trivial. Using Corollary \ref{Global} and noting $h=\log(\frac{u}{\delta})$ and $(1-h)>0$ we find that
\begin{eqnarray}
	\nonumber \log\bigg(\frac{1-h(x_2,t)}{1-h(x_1,t)}\bigg)&=&\int_{0}^{1}\frac{d}{ds}\log[1-h(\phi(s),t)]ds\\
	\nonumber &\le& \int_{0}^{1}-\frac{\langle \nabla h(\phi(s),t),\phi'(s)\rangle}{(1-h(\phi(s),t))}ds\\
	\nonumber &\le&\int_{0}^{1}\frac{|\nabla h|}{(1-h)}|\phi'(s)|ds.
	\end{eqnarray}
Hence
\begin{align}
\nonumber \frac{1-h(x_1,t)}{1-h(x_2,t)}\le& \exp\biggl\{-Cd(x_1,x_2;t)\bigg[\sqrt{\frac{1}{t}}+\sqrt{\mathcal{H}}+N_p^2+N_q^2+M_A+M_B\\
\nonumber &\ \ \ \ \ \ \ \ \ \ \ \ +\sqrt{M_\mathcal{G}}
+\left(\frac{|A(u)|}{u}\right)^{2/3}+\left(\frac{|B(u)|}{u}\right)^{2/3}\bigg]\biggr\}.
\end{align}
So,
\begin{align}
\nonumber \frac{\log\left[\frac{e\delta}{u(x_1,t)}\right]}{\log\left[\frac{e\delta}{u(x_2,t)}\right]}&\le
\exp\biggl\{-Cd(x_1,x_2;t)\bigg[\sqrt{\frac{1}{t}}+\sqrt{\mathcal{H}}+N_p^2+N_q^2+M_A+M_B\\
\nonumber &\ \ \ \ \ \ \ \ \ \ \ \ \ +\sqrt{M_\mathcal{G}}
+\left(\frac{|A(u)|}{u}\right)^{2/3}+\left(\frac{|B(u)|}{u}\right)^{2/3}\bigg]\biggr\}.
	\end{align}
Therefore,
\begin{equation}
\nonumber	\frac{u(x_1,t)}{e\delta}\le\left[\frac{u(x_2,t)}{e\delta}\right]^\alpha.
\end{equation}
This completes the proof.
\end{proof}

\begin{theorem}\label{3.2}
Let $u$ be a bounded positive solution to \eqref{heat} along the flow \eqref{flow} with $0 <u \le \delta$. Let $\gamma(t)$ to be smooth, non-negative but arbitrary function and consider
\begin{equation}\label{3.5}
\mathcal{F}_\gamma[u]=\frac{\gamma(t)|\nabla u|^2}{u}-u\log\left(\frac{\delta}{u}\right),
\end{equation}
then we have

\begin{align}\label{3.6}
\nonumber \bigg[\partial_t-\Delta_f-\frac{A(u)}{u}p(x,t)-\frac{B(u)}{u}q(x,t)\bigg]\mathcal{F}_\gamma[u]&\le
\frac{2\gamma}{u}\bigg[A(u)\langle \nabla u,\nabla p\rangle+B(u)\langle \nabla u,\nabla q\rangle\bigg]\\
\nonumber &+(\gamma'+2k\gamma-1)\frac{|\nabla u|^2}{u}+A(u)p\\
\nonumber & +B(u)q+\frac{2\gamma |\nabla u|^2}{u}\bigg[\mathcal{G}'(u)-\frac{\mathcal{G}(u)}{u}\bigg]\\
\nonumber &+\frac{2\gamma p|\nabla u|^2}{u}\bigg[A'(u)-\frac{A(u)}{u}\bigg]\\
&+\frac{2\gamma q|\nabla u|^2}{u}\bigg[B'(u)-\frac{B(u)}{u}\bigg]+\mathcal{F}_\gamma[u]\frac{\mathcal{G}(u)}{u},
\end{align}

\end{theorem}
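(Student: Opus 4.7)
\smallskip
\noindent\textbf{Proof plan.} The natural strategy is to split $\mathcal{F}_\gamma[u]=\gamma(t)\frac{|\nabla u|^2}{u}+u\log\frac{u}{\delta}$ and apply the heat operator $\partial_t-\Delta_f$ to each summand separately, then recombine and drop the nonpositive Hessian term at the end. For the log-piece, a short computation using the product rule \eqref{eq_Del_uv}, the identity $\nabla\log u=\nabla u/u$, and $\Delta_f\log u=\Delta_f u/u-|\nabla u|^2/u^2$ yields
\[
(\partial_t-\Delta_f)\bigl[u\log(u/\delta)\bigr]=\bigl(1+\log(u/\delta)\bigr)(\partial_t u-\Delta_f u)-\frac{|\nabla u|^2}{u},
\]
and then substituting \eqref{heat} replaces the parenthesis by $A(u)p+B(u)q+\mathcal{G}(u)$.

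\smallskip
\noindent For the gradient-piece, I plan to apply the quotient rule of Lemma \ref{lemma1} (which transfers verbatim from $\Delta$ to $\Delta_f$) to obtain
\[
(\partial_t-\Delta_f)\!\left(\tfrac{|\nabla u|^2}{u}\right)=\tfrac{(\partial_t-\Delta_f)|\nabla u|^2}{u}-\tfrac{|\nabla u|^2(\partial_t u-\Delta_f u)}{u^2}+\tfrac{2}{u}\bigl\langle\nabla\!\bigl(\tfrac{|\nabla u|^2}{u}\bigr),\nabla u\bigr\rangle,
\]
and then expand $(\partial_t-\Delta_f)|\nabla u|^2$ using the weighted Bochner identity of Lemma \ref{lemma0} together with the evolution formula $\partial_t|\nabla u|^2=-(\partial_t g)(\nabla u,\nabla u)+2\langle\nabla u,\nabla\partial_t u\rangle$ and the chain rule applied to $\nabla\partial_t u$ via \eqref{heat}. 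This introduces a curvature expression $-[\partial_t g+2Ric_f](\nabla u,\nabla u)$, which the super Perelman-Ricci hypothesis \eqref{flow} bounds above by $2k|\nabla u|^2$ (after discarding the nonnegative $(m-n)^{-1}\langle\nabla u,\nabla f\rangle^2$ correction coming from the passage $Ric_f\mapsto Ric_f^m$).

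\smallskip
\noindent Collecting everything, adding the $\gamma'(t)|\nabla u|^2/u$ contribution from differentiating $\gamma$ in time, subtracting $u^{-1}(A(u)p+B(u)q)\mathcal{F}_\gamma[u]$ from both sides, and dropping the manifestly nonpositive term $-2\gamma|\nabla^2u|^2/u$ produces the desired $\leq$ statement. The final cosmetic step is to regroup the $A$-, $B$-, and $\mathcal{G}$-terms into the combinations $A'(u)-A(u)/u$, $B'(u)-B(u)/u$, and $\mathcal{G}'(u)-\mathcal{G}(u)/u$, and to recognize that the residual $u^{-1}\log(u/\delta)\mathcal{G}(u)$ piece from the log-derivative combines with one of the $\gamma|\nabla u|^2\mathcal{G}(u)/u^2$ cross terms to form precisely $\mathcal{F}_\gamma[u]\,\mathcal{G}(u)/u$.

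\smallskip
\noindent The only genuine difficulty is organizational: the expansion produces on the order of a dozen cross terms of mixed types ($|\nabla u|^2\cdot (A(u)p)/u^2$, $A(u)\langle\nabla u,\nabla p\rangle/u$, $\log(u/\delta)\mathcal{G}(u)$, etc.), and one must keep careful track of sign conventions when matching them to the right-hand side. No new analytic input beyond Lemmas \ref{lemma0}–\ref{lemma1} and the Ricci flow lower bound is required; in particular the cut-off machinery of Lemma \ref{lemma 3.3} is not needed here since \eqref{3.6} is a pointwise differential identity, not an a priori estimate.
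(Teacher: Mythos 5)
Your overall route is the same as the paper's: split $\mathcal{F}_\gamma[u]$ into the gradient piece $\gamma|\nabla u|^2/u$ and the log piece, apply $\partial_t-\Delta_f$ to each via the product and quotient rules of Lemma \ref{lemma1}, invoke the weighted Bochner identity of Lemma \ref{lemma0} together with $\partial_t|\nabla u|^2=-(\partial_tg)(\nabla u,\nabla u)+2\langle\nabla u,\nabla\partial_tu\rangle$, substitute \eqref{heat}, and close with the super-flow bound $\partial_tg+2Ric_f\ge-2kg$. Your formula for the log piece is correct, and your observation that $Ric_f\ge Ric_f^m$ is exactly how the paper transfers the flow hypothesis from $Ric_f^m$ to the $Ric_f$ appearing in Bochner.

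However, there is a genuine gap in your final nonpositivity step. You plan to "drop the manifestly nonpositive term $-2\gamma|\nabla^2u|^2/u$." That single term is indeed nonpositive, but it is not the only leftover. Your own quotient-rule expansion
\[
(\partial_t-\Delta_f)\Bigl(\tfrac{|\nabla u|^2}{u}\Bigr)=\tfrac{(\partial_t-\Delta_f)|\nabla u|^2}{u}-\tfrac{|\nabla u|^2(\partial_t u-\Delta_f u)}{u^2}+\tfrac{2}{u}\Bigl\langle\nabla\Bigl(\tfrac{|\nabla u|^2}{u}\Bigr),\nabla u\Bigr\rangle
\]
produces, after expanding $\nabla(|\nabla u|^2/u)=\nabla|\nabla u|^2/u-|\nabla u|^2\nabla u/u^2$, the additional two terms $\frac{2\langle\nabla|\nabla u|^2,\nabla u\rangle}{u^2}$ and $-\frac{2|\nabla u|^4}{u^3}$, neither of which appears on the right-hand side of \eqref{3.6}. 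The cross term $\frac{2\langle\nabla|\nabla u|^2,\nabla u\rangle}{u^2}$ has no definite sign and cannot simply be discarded. What is actually needed, and what the paper uses, is the completed-square identity
\[
|\nabla^2u|^2-\frac{\langle\nabla|\nabla u|^2,\nabla u\rangle}{u}+\frac{|\nabla u|^4}{u^2}=\Bigl|\nabla^2u-\frac{\nabla u\otimes\nabla u}{u}\Bigr|^2\ge0,
\]
applied to the entire three-term group $-\frac{2\gamma}{u}\bigl[|\nabla^2u|^2-\frac{\langle\nabla|\nabla u|^2,\nabla u\rangle}{u}+\frac{|\nabla u|^4}{u^2}\bigr]\le0$. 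Without recognizing this grouping, the bound does not close. Also note that your claim that no analytic input beyond Lemmas \ref{lemma0}--\ref{lemma1} and the flow lower bound is needed should be amended to include this (elementary but essential) algebraic identity.
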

\begin{proof}
Let's start by evaluating each term on the left of \eqref{3.6}.
\begin{equation*}
\partial_t\left[\frac{\gamma|\nabla u|^2}{u}\right]=\frac{\gamma'|\nabla u|^2}{u}
+\frac{\gamma}{u}\bigg[-[\partial_tg](\nabla u,\nabla u)+2\langle \nabla u,\nabla \partial_t u\rangle-\frac{|\nabla u|^2}{u}\partial_t u\bigg],
\end{equation*}
\begin{equation*}
\Delta_f\left(\frac{|\nabla u|^2}{u}\right)=\frac{\Delta_f|\nabla u|^2}{u}-\frac{2\langle \nabla |\nabla u|^2,\nabla u\rangle}{u^2}-\frac{|\nabla u|^2\Delta_f u}{u^2}+\frac{2|\nabla u|^4}{u^3},
\end{equation*}
\begin{equation*}
\partial_t(u\log(\frac{\delta}{u}))=[\log(\frac{\delta}{u}-1)]\partial_t u,
\end{equation*}	
and
\begin{equation*}
\Delta_f(u\log(\frac{\delta}{u}))=[\log(\frac{\delta}{u}-1)]\Delta_fu-\frac{|\nabla u|^2}{u}.
\end{equation*}
Thus \eqref{3.5} yields
\begin{align}
\nonumber &\bigg[\partial_t-\Delta_f-\frac{A(u)}{u}p(x,t)-\frac{B(u)}{u}q(x,t)\bigg]\mathcal{F}_\gamma[u]\\
\nonumber =& \frac{\gamma'|\nabla u|^2}{u}+\gamma\bigg(\partial_t-\Delta_f-\frac{A(u)}{u}p(x,t)-\frac{B(u)}{u}q(x,t)\bigg)\bigg(\frac{|\nabla u^2|}{u}\bigg)\\
\nonumber &-\bigg(\partial_t-\Delta_f-\frac{A(u)}{u}p(x,t)-\frac{B(u)}{u}q(x,t)\bigg)\bigg(u\log(\frac{\delta}{u})\bigg)\\
\nonumber =& \frac{\gamma}{u}\bigg[2\langle \nabla u,\nabla \partial_t u\rangle-[\partial_tg](\nabla u,\nabla u)-\Delta_f|\nabla u|^2+\frac{2\langle \nabla |\nabla u|^2,\nabla u\rangle}{u}-\frac{2|\nabla u|^4}{u^2}\bigg]\\
\nonumber &+ \frac{\gamma|\nabla u|^2}{u^2}\bigg[\partial_t-\Delta_f-\frac{A(u)}{u}p(x,t)-\frac{B(u)}{u}q(x,t)\bigg](u)-2\gamma p\frac{|\nabla u|^2}{u}\frac{A(u)}{u}-2\gamma q\frac{|\nabla u|^2}{u}\frac{B(u)}{u}\\
\nonumber &-[\log(\frac{\delta}{u}-1)]\bigg[\partial_t-\Delta_f-\frac{A(u)}{u}p(x,t)-\frac{B(u)}{u}q(x,t)\bigg](u)\\
\nonumber &+A(u)p+B(u)q+(\gamma'-1)\frac{|\nabla u|^2}{u}.
\end{align}
Using \eqref{main} and
$2\langle \nabla u,\nabla \partial_t u\rangle=2\langle \nabla u,\nabla \Delta_f u\rangle+2\langle \nabla u,\nabla [A(u)p+B(u)q+\mathcal{G}(u)]\rangle$ we see that
\begin{align}\label{3.9}
\nonumber &\bigg[\partial_t-\Delta_f-\frac{A(u)}{u}p(x,t)-\frac{B(u)}{u}q(x,t)\bigg]\mathcal{F}_\gamma[u]\\
\nonumber =& \frac{\gamma}{u}\bigg[-[\partial_tg](\nabla u,\nabla u)+2\langle \nabla u,\nabla \Delta_f u\rangle+2\langle \nabla u,\nabla [A(u)p+B(u)q+\mathcal{G}(u)]\\
\nonumber & -\Delta_f|\nabla u|^2 +\frac{2\langle \nabla |\nabla u|^2,\nabla u\rangle}{u}-\frac{2|\nabla u|^4}{u^2}\bigg]+(\gamma'-1)\frac{|\nabla u|^2}{u}+A(u)p\\
&+B(u)q -[\log(\frac{D}{u}-1)]\mathcal{G} (u)-\frac{\gamma|\nabla u|^2}{u^2}\bigg[\mathcal{G} (u)+2\gamma A(u)p+2\gamma B(u)q\bigg].
\end{align}
From the weighted Bochner formula we have
\begin{equation*}
2\langle \nabla u,\nabla \Delta_f u\rangle=\Delta_f|\nabla u|^2-2|\nabla^2 u|^2-2Ric_f(\nabla u,\nabla u).
\end{equation*}
Using this in \eqref{3.9} we infer
\begin{align}
\nonumber& \bigg[\partial_t-\Delta_f-\frac{A(u)}{u}p(x,t)-\frac{B(u)}{u}q(x,t)\bigg]\mathcal{F}_\gamma[u]\\
\nonumber =&-\frac{\gamma}{u}\bigg[[\partial_tg](\nabla u,\nabla u)+2|\nabla^2 u|^2+2Ric_f(\nabla u,\nabla u)-\frac{2\langle \nabla |\nabla u|^2,\nabla u\rangle}{u}\bigg]-\frac{2\gamma}{u}\frac{|\nabla u|^4}{u^2}\\
\nonumber &+\frac{2\gamma}{u}\langle \nabla u,\nabla(A(u)p)\rangle+\frac{2\gamma}{u}\langle \nabla u,\nabla(B(u)q)\rangle+\frac{2\gamma}{u}\langle \nabla u,\nabla(\mathcal{G}(u))\rangle+(\gamma'-1)\frac{|\nabla u|^2}{u}\\
\nonumber &-[log(\frac{\delta}{u}-1)]\mathcal{G} (u)+A(u)p+B(u)q-\frac{\gamma|\nabla u|^2}{u^2}\bigg[\mathcal{G} (u)+2\gamma A(u)p+2\gamma B(u)q\bigg].
\end{align}
Using the facts that $|\nabla^2 u|^2-\frac{\langle \nabla |\nabla u|^2,\nabla u\rangle}{u}+\frac{|\nabla u|^4}{u^2}=|\nabla^2u-\frac{\nabla u \otimes \nabla u}{u}|^2\ge 0$ and  $\partial_tg+2Ric_f\ge -2kg$ in the above equation we have our desired result.
\end{proof}

\begin{theorem}
Let $M$ be closed weighted Riemannian manifold and $u$ be a positive bounded solution of \eqref{heat} with $0<u\le \delta$ in $M \times [0,T]$ and metric-potential pair evolves under \eqref{flow} with $k\ge 0$. Also suppose that $A(u),\ B(u),\ \mathcal{G}(u)\ge 0$ and $A'(u),\ B'(u),\ \mathcal{G}'(u) \le 0$. Then
\begin{align*}
&t|\nabla \log(u)|^2 \le \\
& \frac{(1+2kt)[1+\log(\frac{\delta}{u})]+\frac{2(1+2kt)^2}{ut}(|\nabla p|^2+|\nabla q^2|^2)+\frac{(1+2kt)A(u)p}{u}+\frac{(1+2kt)B(u)q}{u}}{1-\frac{(1+2kt)^2}{ut^2}A^2(u)-\frac{(1+2kt)^2}{ut^2}B^2(u)}.
\end{align*}
\end{theorem}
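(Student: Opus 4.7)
The plan is to invoke Theorem \ref{3.2} with the specific choice $\gamma(t) = t$, apply the parabolic maximum principle on the closed manifold, and then rearrange by Young's inequality to isolate $t|\nabla\log u|^{2}$. With $\gamma = t$ the coefficient $(\gamma'+2k\gamma-1) = 2kt$, and the functional reads $\mathcal{F}_{t}[u] = t|\nabla u|^{2}/u - u\log(\delta/u)$; crucially,
\[
t|\nabla\log u|^{2} \;=\; \frac{\mathcal{F}_{t}[u]}{u} + \log\Bigl(\frac{\delta}{u}\Bigr),
\]
so a bound on $\mathcal{F}_{t}/u$ at any point yields the target inequality.

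First, since $\mathcal{F}_{t}(x,0) = -u\log(\delta/u)\le 0$ for $0<u\le\delta$, either $\mathcal{F}_{t}\le 0$ throughout $M\times[0,T]$ (in which case $t|\nabla\log u|^{2}\le\log(\delta/u)$ and the stated estimate is immediate), or the maximum of $\mathcal{F}_{t}$ is attained at some interior point $(x_{1},t_{1})$ with $t_{1}>0$ and $\mathcal{F}_{t}(x_{1},t_{1})>0$. At such a point the usual first/second derivative tests give $\nabla\mathcal{F}_{t}=0$, $\Delta_{f}\mathcal{F}_{t}\le 0$, $\partial_{t}\mathcal{F}_{t}\ge 0$, hence $(\partial_{t}-\Delta_{f}-X)\mathcal{F}_{t}\ge -X\mathcal{F}_{t}$ where $X=(Ap+Bq)/u$; combined with Theorem \ref{3.2} this gives
\[
-X\,\mathcal{F}_{t} \;\le\; \text{RHS of \eqref{3.6}}.
\]

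Next, I exploit the sign hypotheses $A,B,\mathcal{G}\ge 0$, $A',B',\mathcal{G}'\le 0$ to bound the three bracketed ``diffusive'' terms in \eqref{3.6}: each $[\mathcal{G}'-\mathcal{G}/u]$, $[A'-A/u]$, $[B'-B/u]$ is dominated by $-\mathcal{G}/u$, $-A/u$, $-B/u$ respectively. Using $\gamma|\nabla u|^{2}/u^{2} = \mathcal{F}_{t}/u + \log(\delta/u)$, the sum of Terms 4--6 and the $\mathcal{F}_{t}\mathcal{G}/u$ Term 7, when added to $X\mathcal{F}_{t}$, collapses to the clean negative contribution
\[
-\,\frac{Y\,\mathcal{F}_{t}}{u}\;-\;2Y\log\Bigl(\frac{\delta}{u}\Bigr),\qquad Y := \mathcal{G}(u)+pA(u)+qB(u)\ge 0.
\]
Substituting this back into the max-principle inequality yields
\[
\frac{Y\,\mathcal{F}_{t}}{u} + 2Y\log\Bigl(\frac{\delta}{u}\Bigr) \;\le\; \frac{2t}{u}\bigl[A\langle\nabla u,\nabla p\rangle+B\langle\nabla u,\nabla q\rangle\bigr] \;+\; 2kt\,\frac{|\nabla u|^{2}}{u} \;+\; Ap+Bq.
\]

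The final step is to handle the two gradient cross terms and the $2kt|\nabla u|^{2}/u$ term. I apply Young's inequality in the split form
\[
\frac{2t\,A\langle\nabla u,\nabla p\rangle}{u} \;\le\; \lambda\,\frac{|\nabla u|^{2}}{u} + \frac{t^{2}A^{2}|\nabla p|^{2}}{\lambda\,u},
\]
(and analogously for the $B,q$ term), then substitute $|\nabla u|^{2}/u = t|\nabla\log u|^{2}\cdot u/t$ (equivalently $=(\mathcal{F}_{t}+u\log(\delta/u))/t$) so that the coefficient-$\lambda$ piece, together with the $2kt|\nabla u|^{2}/u$ contribution, combine into a linear combination of $t|\nabla\log u|^{2}$. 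Choosing the Young parameter $\lambda$ so that the accumulated coefficient of $|\nabla u|^{2}/u$ is exactly $(1+2kt)/t$ multiplied by the factor absorbing $A^{2}/u$ and $B^{2}/u$ produces, after rearrangement, an inequality of the form
\[
\Bigl(1 - \frac{(1+2kt)^{2}(A^{2}+B^{2})}{u\,t^{2}}\Bigr)\,t|\nabla\log u|^{2} \;\le\; (1+2kt)\bigl[1+\log(\delta/u)\bigr] + \frac{2(1+2kt)^{2}(|\nabla p|^{2}+|\nabla q|^{2})}{u\,t} + \frac{(1+2kt)(Ap+Bq)}{u},
\]
which is the asserted estimate at the maximum point, and hence everywhere since $\mathcal{F}_{t}$ is maximized at $(x_{1},t_{1})$.

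The main obstacle I anticipate is the precise bookkeeping in step three: one must choose the Young parameter $\lambda$ so that \emph{simultaneously} (i) the residual $|\nabla u|^{2}/u$ coefficient, after combining with $2kt|\nabla u|^{2}/u$ and with the $Y\mathcal{F}_{t}/u$ contribution on the left, becomes exactly $(1+2kt)^{2}(A^{2}+B^{2})/(ut^{2})$, and (ii) the coefficient of $|\nabla p|^{2}$, $|\nabla q|^{2}$ lands on $2(1+2kt)^{2}/(ut)$. This is a single algebraic balance, but getting the powers of $(1+2kt)$ correct (they arise from repeatedly converting $|\nabla u|^{2}/u$ to $t|\nabla\log u|^{2}$ via the identity above) is the delicate point; dropping the non-positive Terms 4--6 rather than absorbing them fully would lose the factor in front of $\log(\delta/u)$.
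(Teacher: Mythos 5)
Your proposal diverges from the paper in a way that creates a real gap: you take $\gamma(t)=t$, whereas the paper chooses $\gamma(t)=\frac{t}{1+2kt}$ precisely so that $\gamma'+2k\gamma-1\le 0$ (and, with $\bar\delta=e\delta$, so that $\log(\bar\delta/u)-1=\log(\delta/u)\ge 0$). With the paper's $\gamma$, the coefficient of $|\nabla u|^{2}/u$ in Theorem \ref{3.2} is non-positive and drops in the maximum-principle estimate, and the $(1+2kt)$ factors in the target inequality then fall out automatically from $t=(1+2kt)\gamma$. With your $\gamma=t$ that coefficient equals $2kt\ge 0$, i.e.\ the \emph{wrong} sign, and you now have an extra positive term $2kt\,|\nabla u|^{2}/u$ that must be absorbed; you defer this to the final ``bookkeeping,'' but that is exactly where the argument is unsubstantiated --- there is no natural mechanism that produces repeated $(1+2kt)$ factors from $\gamma=t$.

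The collapse step is also problematic. Bounding the three bracketed terms by $-\mathcal{G}/u$, $-A/u$, $-B/u$ requires $p,q\ge 0$ (otherwise $\frac{2\gamma p|\nabla u|^{2}}{u}[A'-A/u]$ can be positive), which is not in the hypotheses; but setting that aside, the identity $\gamma|\nabla u|^{2}/u^{2}=\mathcal{F}_{\gamma}/u+\log(\bar\delta/u)$ converts your collapsed expression into $-\frac{Y\mathcal{F}_{t}}{u}-2Y\log(\cdot)$ with $Y=\mathcal{G}+pA+qB$, so your max-point inequality reads
\[
\frac{Y\,\mathcal{F}_{t}}{u}+2Y\log\!\Bigl(\frac{\delta}{u}\Bigr)\;\le\;\frac{2t}{u}\bigl[A\langle\nabla u,\nabla p\rangle+B\langle\nabla u,\nabla q\rangle\bigr]+2kt\,\frac{|\nabla u|^{2}}{u}+Ap+Bq.
\]
The factor $Y$ multiplying $\mathcal{F}_{t}/u$ is a genuine obstruction: to isolate $t|\nabla\log u|^{2}$ you would have to divide by $Y$, yet $Y$ is nowhere in the claimed estimate (and $Y$ may vanish), while the $2kt\,|\nabla u|^{2}/u$ term on the right has no counterpart to cancel it. The cross-term Young inequality produces $\lambda|\nabla u|^{2}/u+\lambda^{-1}t^{2}A^{2}|\nabla p|^{2}/u$, but no choice of $\lambda$ can simultaneously generate the denominator $1-\frac{(1+2kt)^{2}}{ut^{2}}(A^{2}+B^{2})$ and the numerator coefficients in the statement while removing both $Y$ and the $2kt$ term, so the ``single algebraic balance'' you allude to does not close. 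The fix is to adopt the paper's $\gamma=\frac{t}{1+2kt}$ and the shift $\delta\mapsto e\delta$; then $\gamma'+2k\gamma-1\le 0$, $-[\log(\bar\delta/u)-1]\mathcal{G}\le 0$, and all bracketed terms drop, leaving only the cross terms and $Ap+Bq$ to be handled by Cauchy--Schwarz/Young, from which the $(1+2kt)$ powers emerge naturally.
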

\begin{proof}
Note that the function $\gamma(t)=\frac{t}{1+2kt}$ is non negative and it satisfies $\gamma'+2k\gamma-1\le 0$. Now choose $\bar{\delta}=e\delta$ in place of $\delta$ in Theorem \ref{3.2} then clearly $u \le \bar{\delta}$. Referring to \eqref{3.6} we get,
\begin{align*}
&\bigg[\partial_t-\Delta_f-\frac{A(u)}{u}p(x,t)-\frac{B(u)}{u}q(x,t)\bigg]\mathcal{F}_\gamma[u]-\frac{2\gamma}{u}\bigg[A(u)\langle \nabla u,\nabla p\rangle+B(u)\langle \nabla u,\nabla q\rangle\bigg]-A(u)p-B(u)q\\
&\le (\gamma'+2k\gamma-1)\frac{|\nabla u|^2}{u}+\frac{2\gamma |\nabla u|^2}{u}\bigg[\mathcal{G}'(u)-\frac{\mathcal{G}(u)}{u}\bigg]+\frac{2\gamma p|\nabla u|^2}{u}\bigg[A'(u)-\frac{A(u)}{u}\bigg]\\
&+\frac{2\gamma q|\nabla u|^2}{u}\bigg[B'(u)-\frac{B(u)}{u}\bigg]-log[\frac{\bar{\delta}}{u}-1]\mathcal{G}(u) \le 0.
\end{align*}
Clearly for all $x \in M$ we have, $\mathcal{F}_\gamma[u](x,0)\le 0$. Thus by an application of Maximum principle (to $\Delta_f$) we deduce, $\mathcal{F}_\gamma[u](x,t)\le 0$ in $M\times[0,T]$. From which after some simplification and applying Cauchy-Schwartz inequality and Young inequality we get the desired result.
\end{proof}
\subsection{Liouville Type Result}
Let $\mathcal{B_R}$ be the closed geodesic ball in $M$ with radius $R > 0$ and centre at $x_0$ and let $(M,g,e^{-f}d\mu)$ be a complete weighted static Riemannian manifold with $Ric_f^m \ge -k_m g$ in $\mathcal{B}\subset M$. Let $u$ be the solution of
\begin{equation}
	 \Delta_f u+A(u)p(x)+B(u)q(x)+\mathcal{G}(u)=0
\end{equation}
 

in $\mathcal{B}$ then there exists a constant $C$ such that in $\mathcal{B}_{R/2}$ we have,
\begin{align}
	\nonumber \frac{|\nabla u|}{u}\le& C \bigg[\frac{1}{R}+\sqrt{\frac{[\gamma_{\Delta_f}]_+}{R}}+\sqrt{k}+N_p^2+N_q^2+M_A+M_B+\sqrt{M_\mathcal{G}}\\
	&\ \ \ \ +\left(\frac{|A(u)|}{u}\right)^{2/3}+\left(\frac{|B(u)|}{u}\right)^{2/3}\bigg]\bigg(1-\log(\frac{u}{\delta})\bigg),
\end{align}
Letting $R\to \infty$ we get a global estimate as follows,
\begin{align}
	\nonumber \frac{|\nabla u|}{u}\le& C \bigg[\sqrt{k}+N_p^2+N_q^2+M_A+M_B+\sqrt{M_\mathcal{G}}\\
	&\ \ \ \ +\left(\frac{|A(u)|}{u}\right)^{2/3}+\left(\frac{|B(u)|}{u}\right)^{2/3}\bigg]\bigg(1-\log(\frac{u}{\delta})\bigg).
\end{align}
 If $Ric_f^m\ge 0$,$p=0,q = 0$ and $u$ is a positive bounded solution of $\Delta_fu+\mathcal{G} (u)=0$ such that $M_\mathcal{G}=0$ then we find that,
 \begin{align}
 \frac{|\nabla u|}{u}\le& C \bigg[M_A+M_B+\left(\frac{|A(u)|}{u}\right)^{2/3}+\left(\frac{|B(u)|}{u}\right)^{2/3}\bigg]\bigg(1-\log(\frac{u}{\delta})\bigg),
 \end{align}
which in particular for $A(u)=0,B(u)=0$ gives the result
	as following,
\\ If $Ric_f^m\ge 0$,$p=0,q = 0$, $A(u)=B(u)=0$ and $u$ is a positive bounded solution of $\Delta_fu+\mathcal{G} (u)=0$ such that $M_\mathcal{G}=0$  then $u$ is constant.\\
\\
\noindent{\bf Acknowledgement:} The author (S. Ghosh) gratefully acknowledges to the University Grants Commission, Government of India for the award of Junior Research Fellowship.\\ \vskip4pt
\noindent \textbf{Data availibility:} The authors confirm that the data supporting the findings of this study are available within the article.\\ \vskip4pt
\noindent \textbf{Conflict of interest:} The authors confirm that there is no conflict of interest.

\vspace{.1in}
\noindent Yanlin Li\\
\noindent School of Mathematics, Hangzhou Normal University, Hangzhou 311120, China\ E-mail: liyl@hznu.edu.cn\\

\noindent{Abimbola Abolarinwa\\ Department of Mathematics, University of Lagos, Akoka, Lagos State, Nigeria}\\
E-mail: A.Abolarinwa1@gmail.com, aabolarinwa@unilag.edu.ng\\

\noindent{Suraj Ghosh \\ Department of Mathematics, The University of Burdwan, Golapbag, Burdwan 713104, West Bengal, India}\\
Email: surajghosh0503@gmail.com\\

\noindent{Shyamal Kumar Hui \\ Department of Mathematics, The University of Burdwan, Golapbag, Burdwan 713104, West Bengal, India}\\
Email: skhui@math.buruniv.ac.in\\

\end{document}